\newcommand{\RR}{\mathbb{R}}
\newcommand{\EE}{\mathbb{E}}
\newcommand{\dom}{{\mathrm{dom}}\,} 
\newcommand{\dist}{{\mathrm{dist}}}
\newcommand{\cX}{{\mathcal{X}}}
\newcommand{\rint}{{{\rm int}\,}}
\DeclareMathOperator*{\Min}{minimize\quad}
\newcommand{\st}{\mbox{subject to}}
\newtheorem{assumption}{Assumption}[section]
\newtheorem{theorem}{Theorem}[section]
\newtheorem{lemma}{Lemma}[section]
\newtheorem{definition}{Definition}[section]
\newtheorem{corollary}{Corollary}[section]
\begin{document}

\title{Revisiting Linearized Bregman Iterations under Lipschitz-like Convexity Condition}

\author{Hui Zhang\thanks{
Corresponding author. Department of Mathematics, National University of Defense Technology,
Changsha, Hunan 410073, China.  Email: \texttt{h.zhang1984@163.com}
}
\and Lu Zhang\thanks{Department of Mathematics, National University of Defense Technology,
Changsha, Hunan 410073, China.}
\and Hao-Xing Yang\thanks{Department of Mathematics, National University of Defense Technology,
Changsha, Hunan 410073, China.}
}

\date{\today}

\maketitle

\begin{abstract}
The linearized Bregman iterations (LBreI) and its variants have received considerable attention in signal/image processing and compressed sensing. Recently, LBreI has been extended to a larger class of nonconvex functions, along with several theoretical issues left for further investigation. In particular, the gradient Lipschitz continuity assumption precludes its use in many practical applications. In this study, we propose a generalized algorithmic framework to unify LBreI-type methods. Our main discovery is that the gradient Lipschitz continuity assumption can be replaced by a Lipschitz-like convexity condition in both convex and nonconvex cases. The proposed framework and theory are then applied to linear/quadratic inverse problems.
\end{abstract}

\textbf{Keywords.} Bregman distance, linearized Bregman iterations, Lipschitz-like convexity condition, linear inverse problem, quadratic inverse problem, regularization, nonconvex minimization, Kurdyka-Losiajewicz property

\textbf{AMS subject classifications.} 49M37,65K05,90C25,90C26,90C30


\section{Introduction}
Let $x_o\in\RR^n$ be an unknown vector which stands for a signal or an image. Assume that we obtain some linear measurements of $x_o$ but the number of measurements is much less than the dimension of $x_o$. Suppose that the measure procedure and data are modeled by a matrix $A\in \RR^{m\times n}$ and a vector $b\in\RR^m$. Can we recover $x_o$ from the measure data $b$ such that $Ax_o\simeq b$?

The question above lies in the research field of linear inverse problems that subsume a great number of important applications such as signal denoising/deblurring and compressed sensing. In the case of $m\ll n$, the reconstruction of a general vector $x_0$ from $b$ is impossible even though the data $b$ is not contaminated since the linear system $Ax=b$ is underdetermined. Fortunately, the vector $x_o$ in many assignments has certain structure such as sparsity and low-rankness in compressed sensing. If we know the prior structure of $x_o$ beforehand, then regularization techniques can help us find the desired $x_o$ from the solution set of $Ax=b$. For example, if we previously know that many entries of $x_o$ are zero (in other words, $x_o$ is sparse), then one can use the $\ell_1$ norm as a regularizer to approach the sparse $x_o$ by solving
\begin{equation}\label{BP}
\Min_x\|x\|_1:=\sum_{i=1}^n|x_i|, ~\st~~ Ax=b,
\end{equation}
which is the well-known basis pursuit problem in the field of signal processing \cite{2001Atomic}. More generally, one may need to consider the optimization problem in the following form
\begin{equation}\label{OP}
\Min_x\{E(x)+  \mu  R(x)\},
\end{equation}
where $E(x):=E(Ax,b)$ is chosen to quantify the ``error" between $Ax$ and $b$, $R(x)$ is an appropriate regularizer used to reflect the desired feature of $x_o$, and $\mu>0$ is a penalty parameter playing the role of controlling the trade-off between the data fidelity and the degree of regularization. In order to obtain further improvement in variational image restoration, the Bregman iteration method related to \eqref{OP}, introduced in \cite{2005An}, consists of iteratively solving the following problems
\begin{equation}\label{BI}
x^{k+1}:=\arg\min_x \{E(x)+ \mu D_R^{p^k}(x,x^k)\},
\end{equation}
where $p^k$ is a subgradient of $R$ at $x^k$, i.e., $p^k\in \partial R(x^k)$, and $D_R^{p^k}(x,x^k)=R(x)-R(x^k)-\langle p^k, x-x^k\rangle$ is the Bregman distance of $R$ between $x$ and $x^k$. Compared with \eqref{OP}, the generalized Bregman distance of $R$ between $x$ and $x^k$ replaces the term $R(x)$ to play the role of regularization. If we start with an initial value $x^0$ such that $p^0=0\in \partial R(x^0)$, then the first step of the Bregman iteration method \eqref{BI} is exactly \eqref{OP} since $D_R^{p^0}(x,x^0)=R(x)-R(x^0)$. The second and consecutive steps of the Bregman iteration method aim to generate better and better reconstructions. There are a huge amount of research results, both experimentally and analytically, confirming the superior properties of the Bregman iterations; see for example \cite{2005An,2006Nonlinear,2007Iterative,X2010Bregmanized,2013Higher,2014Color}.

\subsection{Linearized Bregman iterations}
The linearized Bregman iterations (LBreI) method, suggested by Darbon and Osher (2007) and formally introduced in the influential work \cite{yin2008bregman}, replaces the data fidelity term $E(x)$ by the sum of its linearized approximation at $x^k$ and a Euclidean norm proximity term $\frac{1}{2\delta}\|x-x^k\|^2$ in \eqref{BI}. Mathematically, it iteratively solves the following subproblems
\begin{equation}\label{LBI}
x^{k+1}:=\arg\min_x \{\langle \nabla E(x^k), x-x^k\rangle+ \frac{1}{2\delta}\|x-x^k\|^2 + \mu D_R^{p^k}(x,x^k)\},
\end{equation}
After a rearrangement of the terms, the LBreI can be equivalently written in the following form
\begin{equation}\label{LBI1}
x^{k+1}:=\arg\min_x \{ R(x)+ \frac{1}{2\delta\mu}\|x-[x^k-( \delta \nabla E(x^k)-\delta\mu p^k)]\|^2\},
\end{equation}
which can be solved easily in many cases or even in a closed-form such as in the case of $R(\cdot)=\|\cdot\|_1$.
Preliminary convergence results of the LBreI was given in \cite{yin2008bregman} by assuming that $R\in C^2$ is strictly convex over a compact set $\Omega\supset \{x^k\}$. Further convergence properties on the LBreI was deduced in \cite{2009Linearized} under the assumption that the convex function $R(\cdot)$ is continuously differentiable and satisfies some Lipschitz property. However, the simple case of $R(\cdot)=\|\cdot\|_1$, which is a central tool in compressed sensing, fails to satisfy the required assumption. Fortunately, this assumption can be dropped, as shown in the following elegant convergence result.
\begin{theorem}[\cite{2009CONVERGENCE}]\label{Th1}
 Assume that the matrix $A$ is full-rank and $0<\delta <\frac{1}{\|AA^T\|}$. Suppose that $R(x)$ is convex and $\partial R(x)$ is bounded. Then for any fixed $\mu>0$, the sequence $\{x^k\}$ generated by \eqref{LBI1} with $E(x)=\frac{1}{2}\|Ax-b\|^2$ converges to the unique solution of the following problem:
 \begin{equation}\label{AOP}
\Min \{\mu R(x)+\frac{1}{2\delta}\|x\|^2: Ax=b\}.
\end{equation}
\end{theorem}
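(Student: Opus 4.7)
The plan is to recast LBreI as gradient descent on the Fenchel dual of \eqref{AOP}. First, the first-order optimality condition for the subproblem yields
\[
p^{k+1} := p^k - \tfrac{1}{\mu}A^T(Ax^k - b) - \tfrac{1}{\delta\mu}(x^{k+1} - x^k) \in \partial R(x^{k+1}).
\]
Introducing the auxiliary sequence $z^k := x^k + \delta\mu p^k$, this compresses into the two-step recursion
\[
z^{k+1} = z^k - \delta A^T(Ax^k - b), \qquad x^k = \prox_{\delta\mu R}(z^k),
\]
where the second identity follows from $\delta\mu p^k = z^k - x^k$ and the standard proximal characterization.

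Next, I would identify this with dual gradient descent. Set $J(x) := \mu R(x) + \tfrac{1}{2\delta}\|x\|^2$, which is $\tfrac{1}{\delta}$-strongly convex, so $J^*$ is differentiable with $\delta$-Lipschitz gradient given by $\nabla J^*(y) = \prox_{\delta\mu R}(\delta y)$. With the usual initialization $x^0 = p^0 = 0$ we have $z^0 = 0 \in \mathrm{range}(A^T)$; since every increment $z^{k+1}-z^k$ also lies in $\mathrm{range}(A^T)$, induction plus the full-rank hypothesis on $A$ produces a unique sequence $\{\lambda^k\} \subset \RR^m$ with $z^k = \delta A^T\lambda^k$. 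The $z$-recursion then becomes $\lambda^{k+1} = \lambda^k - (Ax^k - b) = \lambda^k - \nabla F(\lambda^k)$, where $F(\lambda) := J^*(A^T\lambda) - \langle b,\lambda\rangle$ is the dual objective of \eqref{AOP}, i.e., plain gradient descent on $F$ with stepsize $1$.

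I would then invoke the classical convergence theorem. Because $J$ is $\tfrac{1}{\delta}$-strongly convex, $F$ has Lipschitz gradient with constant $L_F = \delta\|AA^T\|$, and the hypothesis $\delta < 1/\|AA^T\|$ puts stepsize $1$ strictly inside the convergence regime $1 < 2/L_F$. Hence $\{\lambda^k\}$ converges to some minimizer $\lambda^*$ of $F$, which exists by strong duality for the strongly convex primal \eqref{AOP}. Lipschitz continuity of $\nabla J^*$ then yields
\[
x^k \;=\; \nabla J^*(A^T\lambda^k) \;\to\; \nabla J^*(A^T\lambda^*) \;=\; x^*,
\]
and $x^*$ is characterized by the KKT relations $Ax^* = b$ and $-\tfrac{1}{\delta}x^* - A^T\lambda^* \in \mu\,\partial R(x^*)$, which identify it as the unique solution of \eqref{AOP}.

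The main obstacle is the reparametrization $z^k = \delta A^T\lambda^k$: one must track a compatible initialization and invoke full row-rank of $A$ so that $\lambda^k$ is unambiguous, and then verify that $\lambda^k$ itself (not just its function values $F(\lambda^k)$) converges---this is the standard but nontrivial iterate-convergence statement for gradient descent in $\RR^m$. A secondary subtlety is pinpointing the role of bounded $\partial R$; I would use it to conclude that $R$ is finite and globally Lipschitz on $\RR^n$, so that $\mathrm{dom}\,J^* = \RR^n$ and $\nabla J^*$ is globally defined, and hence the gradient-descent analysis on $F$ applies without any a priori bound on the trajectory.
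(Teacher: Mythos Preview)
Your argument is correct. You recast LBreI as unit-stepsize gradient descent on the dual function $F(\lambda)=J^*(A^T\lambda)-\langle b,\lambda\rangle$ via the substitution $z^k=x^k+\delta\mu p^k=\delta A^T\lambda^k$, read off the Lipschitz constant $L_F=\delta\|AA^T\|<1$, and invoke iterate convergence of gradient descent together with $x^k=\nabla J^*(A^T\lambda^k)$. All steps check out; the only quibble is that bounded $\partial R$ is not really needed to make $\nabla J^*$ globally defined (strong convexity of $J$ already gives $\dom J^*=\RR^n$ and a single-valued argmin), though it does guarantee that $R$ is real-valued and hence that the iteration itself is well posed from any starting point.

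This is a genuinely different route from the paper's. Theorem~\ref{Th1} is quoted from \cite{2009CONVERGENCE} without proof; the paper's own contribution is the generalization Theorem~\ref{mainthm1}, whose proof is entirely primal. There the key object is $H(x,x^k)=\delta\mu D_R(x,x^k)+D_h(x,x^k)$: the descent inequality of Lemma~\ref{dein} yields $H(\bar x,x^{k+1})\le H(\bar x,x^k)+(\alpha(h)-\omega)D_h(x^{k+1},x^k)$ for any minimizer $\bar x$, and Lemmas~\ref{mainlemma}--\ref{mainlemma2} turn this into convergence of $\{x^k\}$ to some $x^*\in\cX_0$. The identification $x^*=\hat x$ then comes from summing \eqref{a22} to write $q^k=\mu p^k+\tfrac1\delta\nabla h(x^k)=-A^T z^{k-1}$ with $\{z^{k-1}\}$ bounded, and plugging this into the subgradient inequality $g(x^k)\le g(\hat x)+\|z^{k-1}\|\,\|A\hat x-Ax^k\|$ together with $\|Ax^k-A\hat x\|\to 0$.

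Your dual argument is shorter and makes the stepsize bound $\delta<1/\|AA^T\|$ completely transparent, but it is tied to $h=\tfrac12\|\cdot\|^2$: the identity $x^k=\prox_{\delta\mu R}(z^k)=\nabla J^*(A^T\lambda^k)$ and the clean gradient-descent structure rely on $\nabla h$ being linear. The paper's primal Bregman-distance argument is heavier but is precisely what survives the passage to a general Legendre $h$ satisfying the Lipschitz-like convexity condition, which is the point of the paper.
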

As the parameter $\mu$ tends to infinity, the term $\frac{1}{2\delta}\|x\|^2$ becomes more and more trivial and finally loses its effect. Actually, this point was clarified in \cite{2009CONVERGENCE} by showing that the unique solution of \eqref{AOP}, denoted by $x_\mu$, tends to a solution of minimizing $R(x)$ subject to $Ax=b$ as $\mu\rightarrow+\infty$; a refiner analysis was done in \cite{Yin2010Analysis} where the parameter $\mu$ only requires to be greater than a certain value. By phrasing \eqref{AOP} as a split feasibility problem and using the concept of Bregman projections, a new deduction of the LBreI and its several new variants were provided in \cite{2014The}.
In order to show global linear convergence of the LBreI, the restricted strongly convex inequality was introduced in \cite{Lai2013Aug}, which was further developed in  \cite{2013Gradient} for accelerated LBreI variants.  The first unified framework that unifies the LBreI and its matrix variant--the singular value thresholding algorithm \cite{cai2010a} was proposed in \cite{2015A} by introducing augmented convex signal recovery models and gauge regularized functions. In order to deal with image deblurring in tight frame domains, the LBreI was modified in \cite{2009Linearizedb} to solve the following problem
 \begin{equation}\label{MAOP}
\Min_x \{\mu R(x)+\frac{1}{2\delta}\|x\|^2: x\in {\arg\min}_z \frac{1}{2}\|Az-b\|^2 \}.
\end{equation}
It should be pointed out that all these mentioned works only consider the special fidelity term $E(x)=\frac{1}{2}\|Ax-b\|^2$. Very recently, some nonconvex extension of the LBreI, allowing $E(x)=E(Ax,b)$ to be in a general form which has a Lipschitz continuous gradient, was made in \cite{benning2021choose}.
Although a group of numerical tests were reported in \cite{benning2021choose} to demonstrate that the LBreI in nonconvex optimization still leads to superior performance than that of the regularized problems \eqref{OP}, the current theory is far from satisfying. On one hand, as partially mentioned in section 4.2 in \cite{benning2021choose}, the required gradient Lipschitz continuity assumption precludes the application of LBreI to many practical problems such as blind deconvolution problems, Poisson inverse problems, and quadratic inverse problems. On the other hand, it is unclear whether similar results to Theorem \ref{Th1} can be established for general convex energy function $E$. These two aspects contribute the main motivation of this study.

\subsection{Beyond gradient Lipschitz continuity}
In order to relax the gradient Lipschitz continuity, we follow a new notion which was recently introduced under the name of Lipschitz-like convexity condition in \cite{Bauschke2016A,Teboull2018A} (also independently rediscovered with the name of relative smoothness in \cite{Lu2016relatively}). If the fidelity term $E(x)$ is two-differentiable, then the gradient Lipschitz continuity is essentially equivalent to that the norm of the Hessian of $E(x)$ can be bounded by a constant.  Let us take a quartic function $E(x)=\frac{1}{12}x^4+\frac{1}{2}x^2$ as an example; as the Hessian $x^2+1$ ``grows" to infinity as $|x|$ tends to infinity, it fails to be gradient Lipschitz continuous. The main idea of the Lipschitz-like convexity condition is to choose a suitable relative function $h(x)$ such that the Hessian of $E(x)$ could be bounded by that of $h(x)$ in the sense of  $L\nabla^2 h(x)-\nabla^2E(x)\succeq 0$ for some constant $L>0$; please refer to \cite{Bauschke2016A} for other equivalent conditions. As the first contribution, we replace the norm proximity term $\frac{1}{2\delta}\|x-x^k\|^2$ in \eqref{LBI} with the Bregman distance term $D_h(x,x^k)$ by choosing a suitable relative function $h(x)$ so that the gradient Lipschitz continuity can be relaxed. Consequently, the generalized method consists in solving the following optimization problems
\begin{equation}\label{MLBI}
x^{k+1}:=\arg\min_x \{\langle \nabla E(x^k), x-x^k\rangle+ \frac{1}{\delta}D_h(x,x^k) + \mu D_R^{p^k}(x,x^k)\}.
\end{equation}
If $\mu=0$, then the Bregman regularization term $\mu D_R^{p^k}(x,x^k)$ disappears, in which case the generalized method \eqref{MLBI} overlaps with the NoLips algorithm and the Bregman proximal gradient method in \cite{Bauschke2016A,bolte2017first}. In other words, our proposed method can also be obtained by adding the term $\mu D_R^{p^k}(x,x^k)$ to the NoLips algorithm or the Bregman proximal gradient method. This point will be highlighted in Section \ref{se3}, where a formal unified framework will be constructed.


\subsection{Contribution and organization}
We summarize the contributions as follows:
\begin{itemize}
  \item For a convex energy function $E$, if it satisfies the Lipschitz-like convexity condition, then we are able to extend Theorem \ref{Th1}. This discovery will correspond to Theorem \ref{mainthm1}.
  \item For a nonconvex energy function $E$, if it satisfies the Lipschitz-like convexity condition and has some ``nice" local properties, then we are able to show the global convergence of generalized LBreI methods. This discovery will correspond to Theorem \ref{mainthm2}.
\end{itemize}

The remainder of the paper is organized as follows. The Bregman distances and the Lipschitz-like convexity condition, as two basic tools, are recalled  in Section \ref{se2}. The unified framework is introduced in Section \ref{se3}. The convergence analysis is presented in Section \ref{se4}. Two application examples are discussed in Section \ref{se5}. Concluding remarks and research directions for future work are given in Section \ref{se6}.

\section{Preliminaries}\label{se2}

 Throughout the paper, we assume that $\EE$ is a finite dimensional vector space with inner product $\langle \cdot, \cdot\rangle$ and induced norm $\|\cdot\|$. For a nonempty subset $\Omega\subseteq \EE$ and a point $x\in \EE$, we define by $\dist(x,\Omega):=\inf_{y\in \Omega}\|x-y\|$ the distance function from $x$ to $\Omega$.  The notation and almost all the facts about convex analysis we employ are standard and can be found in Rockafellar's classic book \cite{Rock1970convex}. Let $f$ be a convex function on $\EE$, the domain (conjugate function of $f$, gradient of $f$, and subgradient of $f$) is denoted by $\dom{f}$ ($f^*$, $\nabla f$, $\partial f$, respectively).

 \subsection{The Bregman distances}
 The most important technical ingredient in the linearized Bregman iterations (also in our proposed algorithmic framework) is the concept of Bregman distance, which was originally introduced by Bregman in the 1967's paper \cite{1967The} for extending the classical method of cyclic orthogonal projections to the case of non-orthogonal projections.  Given a differentiable convex function $h$, the Bregman distance associated with $h$ between two points $x$ and $y$ is defined as
 $$D_h(x,y):=h(x)-h(y)-\langle \nabla h(y), x-y\rangle.$$
It is worth noting that $D_h$ is not a metric since the symmetry and the triangle inequality fail to hold.
In order to guarantee that projection methods equipped with the Bregman distance still behave well,  a ``nice" convex function $h$ has to be chosen.
The class of Legendre functions, which was investigated in \cite{1997Legendre}, have become a popular choice to define the Bregman distance \cite{Bauschke2016A}. Now, we recall its definition below.

\begin{definition}[Legendre functions, \cite{Rock1970convex}]
Let $h: \EE \rightarrow(-\infty, +\infty]$ be a proper lower semicontinuous (lsc) convex function. We say that
 \begin{itemize}
   \item  it is essentially smooth if $\rint\dom{h}\neq\emptyset$, $h$ is differentiable on $\rint\dom{h}$, and  $\|\nabla h(x^k)\|\rightarrow \infty$ for every sequence $\{x^k\}_{k\geq 0}\subseteq \rint\dom{h}$ converging to a boundary point of $\dom{h}$ as $k\rightarrow \infty$,
   \item and it is of Legendre type if $h$ is essentially smooth and strictly convex on $\rint\dom{h}$.
 \end{itemize}
\end{definition}

Note that the Legendre functions are essentially smooth and hence differentiable on $\rint\dom{h}$. However, the associated function $R$ in the linearized Bregman iterations may be non-differentiable on $\rint\dom{R}$; for example $R(\cdot)=\|\cdot\|_1$ is not of Legendre type. Therefore, we need the concept of generalized Bregman distances, introduced by Kiwiel in \cite{1997proximal}. It is worth emphasizing that we do not completely follow the definition of Kiwiel in \cite{1997proximal} where the associated function is needed to be strictly convex on its domain, excluding the case of $R(\cdot)=\|\cdot\|_1$ again. Given a proper lsc convex function $h$, the generalized Bregman distance associated with $h$ between $x, y\in \EE$ with respect to a subgradient $y^*\in\partial h(y)$ is defined by
\begin{equation}\label{Bregd}
D_h^{y^*}(x,y):=h(x)-h(y)-\langle y^*, x-y\rangle, \forall x\in \dom{h},~ y\in \dom{\partial h}.
\end{equation}
Using this generalized definition, we can obtain important lemma which generalizes the three points identity in \cite{Chen1993conv}.
\begin{lemma}[\cite{1997Free,1997proximal}]\label{lemBreg}
Let $h:\EE\rightarrow(-\infty, +\infty]$ be a proper lsc convex function with $\dom\partial h\neq \emptyset$. For any $z\in \dom{h}$ and $ x,y\in \dom{\partial h}$ and $x^*\in \partial h (x), y^*\in \partial h (y)$, we have that
\begin{equation}\label{Bregdis1}
 D_h^{x^*}(z,x) - D_h^{y^*}(z,y) - D_h^{x^*}(y,x)=\langle x^*-y^*, y-z\rangle.
\end{equation}
\end{lemma}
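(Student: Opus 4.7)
The plan is to prove the identity by direct algebraic expansion of the three generalized Bregman distances using definition \eqref{Bregd}. Each term $D_h^{w^*}(u,v)$ unfolds to $h(u) - h(v) - \langle w^*, u-v\rangle$, so the entire left-hand side becomes a sum of six functional evaluations of $h$ together with three inner-product terms. The key observation is that the functional evaluations telescope: the two copies of $h(z)$ contributed by $D_h^{x^*}(z,x)$ and $-D_h^{y^*}(z,y)$ cancel, the $-h(x)$ from $D_h^{x^*}(z,x)$ and the $+h(x)$ from $-D_h^{x^*}(y,x)$ cancel, and similarly the $+h(y)$ from $-D_h^{y^*}(z,y)$ and the $-h(y)$ from $-D_h^{x^*}(y,x)$ cancel.

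After all $h$-values vanish, there remain only the three inner products
$$-\langle x^*, z-x\rangle + \langle y^*, z-y\rangle + \langle x^*, y-x\rangle.$$
First I would collect the terms carrying $x^*$, obtaining $\langle x^*, (y-x)-(z-x)\rangle = \langle x^*, y-z\rangle$. Then I would rewrite $\langle y^*, z-y\rangle$ as $-\langle y^*, y-z\rangle$ and combine it with the previous quantity to obtain $\langle x^*-y^*, y-z\rangle$, matching the right-hand side of \eqref{Bregdis1}.

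There is no genuine obstacle here; the identity is purely algebraic. The only verification required is that every quantity is well defined: $h(z)$ is finite because $z\in\dom h$; $h(x)$ and $h(y)$ are finite because $x,y\in\dom\partial h\subseteq\dom h$; and the subgradients $x^*\in\partial h(x)$, $y^*\in\partial h(y)$ exist by hypothesis so the pairings are meaningful. Notably, neither strict convexity nor differentiability of $h$ is invoked anywhere in the argument, which is exactly why the statement extends the classical three-points identity of \cite{Chen1993conv} to nondifferentiable regularizers such as $R(\cdot)=\|\cdot\|_1$.
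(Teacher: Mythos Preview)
Your proof is correct; the identity indeed follows from the direct expansion you outline, and your remarks on well-definedness are accurate. The paper does not actually supply its own proof of this lemma---it is quoted from \cite{1997Free,1997proximal}---but the argument there is precisely this termwise unfolding of the Bregman distances followed by cancellation, so your approach matches the standard one.
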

Recall the fact that for a Legendre function $h$, it holds that  $\dom \partial h=\rint\dom{h}$ with $\partial h(x)=\{\nabla h(x)\}$ for any $x\in\rint\dom{h}$. Therefor, applying Lemma \ref{lemBreg}, we recover the well-known three points identity
\begin{equation}\label{threep}
D_h(z,x) - D_h(z,y) - D_h(y,x)=\langle\nabla h(x)-\nabla h(y), y-z\rangle,
\end{equation}
which holds for any Legendre function $h$ with the three points $z\in \dom{h}$ and $x,y\in  \rint\dom{h}$.

At last, we introduce a measure for the lack of symmetry in $D_h$.
\begin{definition}[\cite{Bauschke2016A}]\label{def1}
Given a Legendre function $h:\EE\rightarrow(-\infty, +\infty]$, its symmetry coefficient is defined by
$$\alpha(h):=\inf \left\{\frac{D_h(x,y)}{D_h(y,x)}: x, y\in \rint\dom{h}, x\neq y\right\}\in [0,1].$$
\end{definition}

\subsection{The Lipschitz-like convexity condition}
The applied scope of the well-known proximal gradient method and its variants is limited by the gradient Lipschitz continuity assumption. Recently, the authors of \cite{Bauschke2016A} proposed the Lipschitz-like/convexity condition as an alternative to go beyond the gradient Lipschitz continuity. Below, we recall its definition.

\begin{definition}[Lipschitz-like/convexity condition, \cite{Bauschke2016A}\cite{Teboull2018A}]
Let $h: \EE \rightarrow(-\infty, +\infty]$ be a Legendre function and let $E: \EE \rightarrow(-\infty, +\infty]$ be a proper lsc function with $\dom E\supset \dom h$, and $E$ is differentiable on $\rint\dom{h}$. Given such a pair of functions $(E,h)$, the Lipschitz-like/convexity condition denoted by (\textbf{LC}) is:
\begin{center}
 (\textbf{LC})~~~$\exists ~L>0$~~with $Lh-E$ convex on $\rint\dom{h}$.
\end{center}
\end{definition}

The following three points extended descent lemma will play a very important role in the forthcoming analysis.
\begin{lemma}\label{lemDesc}
Consider the pair of functions $(E,h)$ as above and take $L>0$. Then, the function $Lh-E$ is convex on $\rint\dom h$ if and only if for any $(x,y,z)\in (\rint\dom h)^3$:
\begin{equation}\label{Desc}
 E(x)\leq E(y)+\langle \nabla E(z), x-y\rangle + LD_h(x,z)-D_E(y,z).
\end{equation}
In particular,  assume that (\textbf{LC}) holds for the pair of convex functions $(E,h)$. Then, for any $(x,y,z)\in \rint\dom{h}\times\dom{h}\times\rint\dom{h}$, we have
\begin{equation}\label{Desc1}
 E(x)\leq E(y)+\langle \nabla E(z), x-y\rangle + LD_h(x,z).
\end{equation}
\end{lemma}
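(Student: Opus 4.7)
The plan is to first establish the equivalence in part 1 by showing that inequality \eqref{Desc} is an algebraic rewriting of the standard gradient descent inequality that characterizes convexity of $Lh-E$; then part 2 will follow by combining the three-point inequality with $D_E \geq 0$ (coming from convexity of $E$), together with a boundary density argument that lets $y$ approach points of $\dom h \setminus \rint\dom h$.

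For part 1, since $h$ is essentially smooth and $E$ is differentiable on $\rint\dom h$, the difference $Lh-E$ is differentiable on the relatively open convex set $\rint\dom h$ with gradient $L\nabla h - \nabla E$. Convexity of a differentiable function on a convex open set is equivalent to the global gradient inequality, so $Lh-E$ is convex on $\rint\dom h$ iff for every $x,z\in\rint\dom h$,
$$Lh(x) - E(x) \;\geq\; Lh(z) - E(z) + \langle L\nabla h(z) - \nabla E(z),\, x - z\rangle.$$
Rearranging this bound and recognizing the Bregman term yields the two-point descent inequality $E(x) \leq E(z) + \langle \nabla E(z), x-z\rangle + L D_h(x,z)$. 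To upgrade to the three-point form \eqref{Desc}, I would add and subtract $E(y) + \langle \nabla E(z), y - z\rangle$ on the right, grouping terms so that $E(y) - E(z) - \langle \nabla E(z), y - z\rangle = D_E(y, z)$ appears with a minus sign. Conversely, setting $y = z$ in \eqref{Desc} collapses $D_E(y,z)$ to zero and recovers the two-point form, so the two inequalities are the same statement. Hence part 1 is just a short rewriting once the right regrouping is identified.

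For part 2, assume both $E$ and $h$ are convex and that (\textbf{LC}) holds. Convexity of $E$ gives $D_E(y,z) \geq 0$ for every $y \in \dom E$ and $z \in \rint\dom E$, so when $y \in \rint\dom h$ the claimed inequality \eqref{Desc1} is obtained immediately from \eqref{Desc} by discarding the nonpositive contribution $-D_E(y,z)$. The remaining work is to extend to $y \in \dom h \setminus \rint\dom h$. Here I would fix $x \in \rint\dom h$, set $y_t := (1-t)y + t x$ for $t \in (0,1]$, and invoke the standard convex-analysis fact that the open segment from $\dom h$ to $\rint\dom h$ lies in $\rint\dom h$, so $y_t \in \rint\dom h$. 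Applying \eqref{Desc1} at $y_t$ and letting $t \to 0^+$, the linear term $\langle \nabla E(z), x - y_t\rangle$ is trivially continuous in $y_t$; the only delicate piece is $E(y_t)$, for which convexity furnishes $E(y_t) \leq (1-t) E(y) + t E(x)$, hence $\limsup_{t \to 0^+} E(y_t) \leq E(y)$, while lower semicontinuity of $E$ yields the matching $\liminf$. The inequality therefore passes to the limit.

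The main obstacle I expect is the boundary extension: one must justify that $\rint\dom h \subseteq \rint\dom E$ (which follows from $E$ being differentiable, hence finite in a neighborhood, at every point of $\rint\dom h$, combined with $\dom h \subseteq \dom E$), so that the convexity-based continuity of $E$ along the segment $[y,x]$ can legitimately be used. Once this is in place, part 2 reduces to combining part 1 with $D_E \geq 0$ and the limit above, and the rest is routine bookkeeping with the Bregman distance.
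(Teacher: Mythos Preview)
Your argument is correct, and the overall strategy matches the paper's: the paper gives no formal proof of the equivalence and for the second part merely remarks that \eqref{Desc1} follows from \eqref{Desc} because $D_E(y,z)\geq 0$ when $E$ is convex.

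There is, however, an unnecessary detour in your part~2. The boundary approximation $y_t=(1-t)y+tx$ is not needed. Your own derivation in part~1 already shows why: starting from the two-point inequality
\[
E(x)\leq E(z)+\langle\nabla E(z),x-z\rangle+LD_h(x,z),\qquad x,z\in\rint\dom h,
\]
you obtain \eqref{Desc} by the purely algebraic device of adding and subtracting $E(y)+\langle\nabla E(z),y-z\rangle$. That manipulation requires only that $E(y)$ be finite, so it is valid for every $y\in\dom E\supset\dom h$, not just $y\in\rint\dom h$. (The restriction $(x,y,z)\in(\rint\dom h)^3$ in the statement of part~1 is there only because the converse direction uses $y=z$; the forward direction holds for all $y\in\dom E$.) Consequently, once $E$ is convex, the gradient inequality $E(y)\geq E(z)+\langle\nabla E(z),y-z\rangle$, i.e.\ $D_E(y,z)\geq 0$, is available for every $y\in\dom E$ and $z\in\rint\dom h$, and \eqref{Desc1} drops out immediately for $y\in\dom h$ without any limiting procedure. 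Your segment argument is a correct alternative, but it buys nothing here and introduces a side verification (continuity of $E$ along the segment) that is avoidable.
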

It is not hard to see that \eqref{Desc1} follows from \eqref{Desc} by noting that $D_E(y,z)\geq 0$ when $E$ is convex.
For a possibly nonconvex function $E$, the term $D_E(y,z)$ is difficult to bound. Therefore, we usually  consider the case of $y=z$, in which the term $D_E(y,z)$ disappears and the corresponding descent inequality becomes:
\begin{equation}\label{Desc2}
 E(x)\leq E(y)+\langle \nabla E(y), x-y\rangle + LD_h(x,y).
\end{equation}
When $h(\cdot)=\frac{1}{2}\|\cdot\|^2$, it further reduces to
\begin{equation}\label{Desc3}
 E(x)\leq E(y)+\langle \nabla E(y), x-y\rangle + \frac{L}{2}\|x-y\|^2,
\end{equation}
which is exactly the classical descent lemma; see for example Lemma 5.7 in \cite{Beck2017First}.

\section{The proposed algorithmic framework}\label{se3}
Let $x_0\in\EE, p^0\in \partial R(x_0)$ be given. We propose the following linearized Bregman iteration framework, abbreviated as LBreIF, to extend the LBreI method.
\begin{subequations}
\begin{align}[left = \empheqlbrace\,]
\label{a1}& x^{k+1}=\arg\min_x \{\langle \nabla E(x^k), x-x^k\rangle+ \beta R(x)+\frac{1}{\delta^k}D_h(x,x^k) + \mu^k D_R^{p^k}(x,x^k)\},   \\
\label{a2}&  p^{k+1}= \frac{\mu^k}{\mu^k+\beta~}p^k-\frac{1}{\delta^k(\mu^k+\beta~)}\left[\nabla h(x^{k+1})-\nabla h(x^k)+\delta^k\nabla E(x^k)\right].
\end{align}
\end{subequations}
To help the reader understand LBreIF better, we add a few remarks:
\begin{itemize}
  \item The relative function $h$ is chosen to be strongly convex on its domain so that the subproblem in \eqref{a1} has the unique solution $x^{k+1}$. The term $D_h(x,x^k)$ is used to replace/generalize the  proximity term $\frac{1}{2}\|x-x^k\|^2$ in the LBreI \eqref{LBI} so that the traditional gradient Lipschitz continuity assumption can be relaxed. Meanwhile, a potential difficulty is how to determine such a desired function $h$ so that the pair of $(E,h)$ satisfies the Lipschitz-like convexity condition and at the same time the subproblem \eqref{a1} can be solved efficiently.
  \item  The term $\beta R(x)$ is taken into account in \eqref{a1} so the NoLips algorithm in \cite{Bauschke2016A} can be included as a special case of \eqref{a1} by setting $\mu^k\equiv0$. In this way, we can see that the main difference between NoLips and LBreIF is the term $D_R^{p^k}(x,x^k)$, which helps us generate regularized solutions via the \textsl{nonsmooth} Bregman distance $D_R(\cdot,\cdot)$; whereas NoLips as well as the traditional regularization algorithms produce solutions via optimizing the sum of a fidelity function and a regularized function. Theoretically, the sequence $\{x^k\}$ generated by LBreIF enjoys a refiner convergence phenomenon, as shown in Theorem \ref{mainthm1}.
  \item The subgradient $p^k\in \partial R(x^k)$ in \eqref{a2} is updated iteratively according to the optimality condition of \eqref{a1}. The reader will find that \eqref{a2} plays a key role in the forthcoming convergence analysis.
\end{itemize}
Now, we present an elegant expression for LBreIF in the special case of $\beta\equiv 0$, $\mu_k\equiv\mu$, and $\delta^k\equiv \delta$. To do this, we first reformulate \eqref{a2} as follows
\begin{equation}\label{a22}
 \nabla h(x^{k+1})+\delta\mu p^{k+1}=\nabla h(x^{k})+\delta\mu p^{k} -\delta \nabla E(x^k).
\end{equation}
Denote $y^k:=\nabla h(x^{k})+\delta\mu p^{k}$; then we have
\begin{subequations}
\begin{align}[left = \empheqlbrace\,]
\label{m1}& y^{k+1}= y^k  -\delta \nabla E(x^k),   \\
\label{m2}&  x^{k+1}= (\nabla h +\delta\mu\partial R)^{-1}(y^{k+1}).
\end{align}
\end{subequations}
In particular, the iterative scheme above with $R(x)\equiv 0$ returns to the well-known mirror descent
$$x^{k+1}=(\nabla h)^{-1}(\nabla h(x^k)  -\delta \nabla E(x^k)),$$
where $\nabla h$ is the mirror mapping and $(\nabla h)^{-1}$ is the inverse mirror mapping \cite{2004Prox}. Thereby, the iterative scheme \eqref{m1}-\eqref{m2} may be viewed as a generalized mirror descent method if we view $\nabla h+\delta\mu \partial R$ as the mirror mapping and $(\nabla h +\delta\mu\partial R)^{-1}$ as the inverse mirror mapping \cite{2021Mirror}.

%

At last, we introduce a natural assumption on the regularized function $R$ and the relative function $h$ so that the iterate \eqref{a1} is well defined if $\mu^k>0$, $\delta^k>0$ and $x^k\in \rint\dom{h}$.
\begin{lemma}\label{wellposed}
If $h+\lambda R$ is supercoercive for all $\lambda>0$ in the sense that
\begin{equation}\label{sup}
{\lim\inf}_{\|x\|\rightarrow \infty}\frac{h(x)+\lambda R(x)}{\|x\|}=\infty,
 \end{equation}
then for any $x^k\in \rint\dom{h}$, the subproblem in \eqref{a1} has the unique solution $x^{k+1}$ and it must belong to $\rint\dom{h}$.
\end{lemma}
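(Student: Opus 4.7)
The plan is to establish in order (i) existence of a minimizer, (ii) its membership in $\rint\dom h$, and (iii) uniqueness. First I would expand the two Bregman distances inside the objective of \eqref{a1} and collect the terms linear in $x$; up to an additive constant this gives
\[ F(x) = \tfrac{1}{\delta^k}\bigl(h(x) + \delta^k(\mu^k+\beta)R(x)\bigr) + \langle c, x\rangle, \qquad c := \nabla E(x^k) - \tfrac{1}{\delta^k}\nabla h(x^k) - \mu^k p^k. \]
Here $\nabla h(x^k)$ is legitimate because $x^k\in\rint\dom h$, and I adopt the natural convention $\beta\geq 0$ so that $\lambda:=\delta^k(\mu^k+\beta)$ is strictly positive.

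For existence I would invoke Weierstrass. The function $F$ is proper lsc convex as a sum of proper lsc convex functions whose domains share the point $x^k$. Feeding $\lambda=\delta^k(\mu^k+\beta)$ into the supercoercivity hypothesis \eqref{sup} yields
\[ \liminf_{\|x\|\to\infty}\frac{F(x)}{\|x\|} = +\infty, \]
since the linear piece $\langle c,x\rangle$ is dominated. Thus $F$ is coercive and $\arg\min F\neq\emptyset$.

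For membership in $\rint\dom h$, I would exploit the essential smoothness of the Legendre function $h$ through the characterization $\dom\partial h=\rint\dom h$ recalled in Section~\ref{se2}. Let $x^{k+1}$ be any minimizer, so that Fermat's rule gives $0\in\partial F(x^{k+1})$. Writing $F=\tfrac{1}{\delta^k}h+G$ with $G$ collecting the remaining convex terms, the Moreau--Rockafellar sum rule is valid under the qualification $\rint\dom h\cap\rint\dom R\neq\emptyset$ (automatic when $\dom R=\EE$, as for $R=\|\cdot\|_1$) and yields
\[ 0\in\tfrac{1}{\delta^k}\partial h(x^{k+1}) + \partial G(x^{k+1}). \]
In particular $\partial h(x^{k+1})\neq\emptyset$, which by the characterization above forces $x^{k+1}\in\rint\dom h$. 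Uniqueness then follows immediately: on $\rint\dom h$ the Legendre function $h$ is strictly convex while the remaining summands of $F$ are convex, so $F$ is strictly convex on the set that necessarily contains every minimizer.

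The main technical obstacle is securing the qualification used to split $\partial F=\tfrac{1}{\delta^k}\partial h+\partial G$. A qualification-free alternative would be to work along the segment $t\mapsto(1-t)\bar x+ty$ from a hypothetical boundary minimizer $\bar x\in\dom h\setminus\rint\dom h$ to an interior point $y\in\rint\dom h$, and argue that essential smoothness of $h$ forces the one-sided derivative of $F$ at $t=0^+$ to equal $-\infty$, contradicting the minimality of $\bar x$.
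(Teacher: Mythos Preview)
Your argument is correct and mirrors the paper's proof almost exactly: both expand the Bregman terms to rewrite the objective as $h+\lambda R$ plus a linear term, invoke supercoercivity and Weierstrass for existence, use the optimality condition together with $\dom\partial h=\rint\dom h$ for interior membership, and appeal to strict convexity of $h$ for uniqueness. Your version is in fact slightly more careful than the paper's---you place uniqueness \emph{after} interior membership (so that strict convexity on $\rint\dom h$ suffices) and you flag the constraint-qualification needed for the subdifferential sum rule, which the paper applies without comment; your proposed directional-derivative workaround via essential smoothness is a nice backup, though under the paper's standing assumption $\dom\partial R\supset\rint\dom h$ the qualification is already available.
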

\begin{proof}
Using the definitions of the Bregman distances, the objective function in \eqref{a1} can be rewritten as
$$g(x):=h(x)+\lambda R(x)+\langle \bar{u}, x\rangle +C,$$
where $\bar{u}\in \EE$ and $C\in \RR$ are constant quantities. Note that $|\langle \bar{u}, x\rangle|\leq \|\bar{u}\|\|x\|$ by the Cauchy-Schwartz inequality. We conclude that the objective function $g$ is still supercoercive. Thus, the set of minimizers of $g$ must be nonempty and compact by Weierstrass' theorem \cite{Beck2017First}. The uniqueness of solution follows from the strict convexity of $h$ on its domain. Finally, by the optimality condition we have
$$0\in \partial h(x^{k+1})+\lambda\partial R(x^{k+1})+ \bar{u},$$
which implies that $x^{k+1}\in \dom\partial h= \rint\dom{h}.$ This completes the proof.
\end{proof}

It can be easily verified that for any strongly convex function $h$ and nonnegative function $R$ (in the sense of $R(x)\geq 0$), the composition function $h+\lambda R$ is always supercoercive for all $\lambda>0$.

\section{Convergence analysis}\label{se4}
In this section, we present a detailed convergence analysis for the proposed algorithmic framework.

\subsection{Assumptions and the key lemma}
The following assumption is a basic setting about the involved functions.
\begin{assumption}\label{ass}
We assume that the relative function $h$, the regularized function $R$, and the loss function $E$ satisfy the following conditions:
\begin{itemize}
  \item[(i)] $h:\EE\rightarrow (-\infty, +\infty]$ is of Legendre type.
  \item[(ii)] $R:\EE\rightarrow [0, +\infty]$ is proper lsc convex with $\dom \partial R\supset \rint\dom h$ and $\dom R\bigcap \rint\dom h\neq \emptyset$.
  \item[(iii)] $E:\EE\rightarrow (-\infty, +\infty]$ is proper lsc with $\dom E\supset \dom h$ and is differentiable on $\rint\dom h$ and continuous on $\dom h$. Moreover, (\textbf{LC}) holds for the pair of functions $(E,h)$.
  \item[(iv)] $-\infty< \inf_{x\in\dom h} f_\beta(x)$ with  $ f_\beta(x):=\beta R(x)+E(x)$ for some $\beta\geq 0$.
\end{itemize}
\end{assumption}
The conditions $\dom \partial R\supset \rint\dom h$ and $\dom R\bigcap \rint\dom h\neq \emptyset$ in item (ii) and the differentiablity of $E$ on $\rint\dom h$ in item (iii) are posed to guarantee the objective function in \eqref{a1} is well-defined if $x^k\in \rint\dom h$ . Item (ii) can be satisfied when $R$ is real-valued since $\dom \partial R= \dom R=\EE$. Hence, many exiting regularized functions like the $\ell_1$ norm obey these restrictions on $R$. Items (i) and (iii) essentially requires the (\textbf{LC}) condition. The last item holds trivially for \textsl{nonnegative} regularized function $R$ and energy function $E$.

In order to derive point convergence results, we will rely on the following assumption:
\begin{assumption}\label{ass1}
The Bregman distance associated with the relative function $h$ and the regularized function $R$ satisfy the following conditions:
\begin{itemize}
  \item[(i)]   For every $x\in \dom h$ and $\eta\in \RR$, one of the level sets $\{y\in\rint\dom h: D_h(x,y)\leq \eta\}$ and $\{y\in \dom R: D_R(x,y)\leq \eta\}$ is bounded.
  \item[(ii)] If $\{x^k\}\subset \rint\dom h$ converges to some $x$ in $\dom h$, then $D_h(x, x^k)\rightarrow 0$ and $D_R(x, x^k)\rightarrow 0$.
  \item[(iii)] If $x\in \rint\dom h$ and if $\{x^k\}$  is such that $D_h(x, x^k)\rightarrow 0$ or $D_R(x, x^k)\rightarrow 0$, then $x^k\rightarrow x$.
\end{itemize}
\end{assumption}
If we drop the requirement $D_R(x, x^k)\rightarrow 0$ in item (ii) above, then Assumption \ref{ass1} can be implied by Assumption H in \cite{Bauschke2016A} and hence it holds for many entropies such as the Boltzman-Shannon function $h(x)=x\log x$ which will appear in the section of application. If $R$ itself is real-valued convex, then we always have that $D_R(x, x^k)\rightarrow 0$ as $x^k\rightarrow x$ due to the continuity of $R$ and the boundedness of subgradients over compact sets; see Theorem 3.16 in \cite{Beck2017First}.

For nonconvex convergence analysis, we require the following assumptions. The first one was used in \cite{benning2021choose} and the second one in \cite{bolte2017first}. Again, if $R$ itself is real-valued convex, then the firs assumption below holds trivially.
\begin{assumption}\label{ass2}
The regularized function $R$ has locally bounded subgradients in the sense that if for every bounded set $U\subset \EE$ there exists a constant $C>0$ such that for every $x\in U$ and for all $p\in \partial R(x)$ we have $\|p\|\leq C$.
\end{assumption}

\begin{assumption}\label{ass3}
A function $f$ is said to be locally gradient-Lipschitz-continuous if for every bounded set $U\subset \rint\dom f$ there exists a constant $L_f>0$ such that for any $x, y\in U$ we have $\|\nabla f(x)-\nabla f(y)\|\leq L_f\|x-y\|$.
\end{assumption}

The next lemma provides key descent estimations for the objective function in terms of the Bregman distances.
\begin{lemma}[Descent inequalities]\label{dein}
Under Assumption \ref{ass}, we have
\begin{eqnarray} \label{desineq0}
\begin{array}{lll}
&&\beta R(x^{k+1})+E(x^{k+1})-(\beta R(x^k)+E(x^k) )\\
&\leq & LD_h(x^{k+1},x^k) -\frac{1}{\delta^k}D_h(x^k,x^{k+1})-\frac{1}{\delta^k}D_h(x^{k+1},x^k)\\
&& -\mu^kD_R(x^k,x^{k+1})-\mu^kD_R(x^{k+1},x^k).
\end{array}
\end{eqnarray}
If $E$ is also assumed convex, then for any $x\in \dom h$, it holds that
\begin{eqnarray} \label{desineq}
\begin{array}{lll}
&&\beta R(x^{k+1})+E(x^{k+1})-(\beta R(x)+E(x) )\\
&\leq &\mu^kD_R(x,x^k)-\mu^kD_R(x,x^{k+1})-\mu^kD_R(x^{k+1},x^k)+\\
&& \frac{1}{\delta^k}D_h(x,x^k)-\frac{1}{\delta^k}D_h(x,x^{k+1})-\frac{1}{\delta^k}D_h(x^{k+1},x^k)+LD_h(x^{k+1},x^k).
\end{array}
\end{eqnarray}
\end{lemma}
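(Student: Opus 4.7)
The plan is to combine the three-points extended descent Lemma \ref{lemDesc} with the optimality condition of the subproblem \eqref{a1} and the three-points identities \eqref{threep} and \eqref{Bregdis1}. As a common setup, the subdifferential optimality of \eqref{a1} uses $\partial_x D_R^{p^k}(\cdot, x^k) = \partial R(\cdot) - p^k$ to produce a subgradient $p^{k+1} \in \partial R(x^{k+1})$ with
\[ \nabla E(x^k) + (\beta + \mu^k) p^{k+1} - \mu^k p^k + \tfrac{1}{\delta^k}\bigl(\nabla h(x^{k+1}) - \nabla h(x^k)\bigr) = 0, \]
which rearranges to exactly the update rule \eqref{a2} (so consistency of the algorithm is automatic); solving for $\nabla E(x^k)$ yields the key relation
\[ \nabla E(x^k) = \mu^k p^k - (\beta + \mu^k) p^{k+1} - \tfrac{1}{\delta^k}\bigl(\nabla h(x^{k+1}) - \nabla h(x^k)\bigr), \qquad (\star) \]
to be substituted wherever $\nabla E(x^k)$ is dotted against a direction.

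For \eqref{desineq0}, which does not require $E$ convex, I would invoke \eqref{Desc2} with $y = x^k$, $x = x^{k+1}$ to obtain $E(x^{k+1}) - E(x^k) \leq \langle \nabla E(x^k), x^{k+1} - x^k\rangle + L D_h(x^{k+1}, x^k)$, then substitute $(\star)$ and expand each inner product via the Bregman definitions $\langle p^k, x^{k+1} - x^k\rangle = R(x^{k+1}) - R(x^k) - D_R^{p^k}(x^{k+1}, x^k)$ and $\langle p^{k+1}, x^{k+1} - x^k\rangle = R(x^{k+1}) - R(x^k) + D_R^{p^{k+1}}(x^k, x^{k+1})$, together with the standard $\langle \nabla h(x^{k+1}) - \nabla h(x^k), x^{k+1} - x^k\rangle = D_h(x^{k+1}, x^k) + D_h(x^k, x^{k+1})$. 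The $R(x^{k+1}) - R(x^k)$ pieces fuse with the $\beta$ factor and migrate to the LHS as $\beta R(x^{k+1}) - \beta R(x^k)$; the surviving coefficient of $D_R^{p^{k+1}}(x^k, x^{k+1})$ is $-(\beta + \mu^k)$, so discarding the nonpositive $-\beta D_R^{p^{k+1}}(x^k, x^{k+1})$ (valid since $R$ is convex) produces \eqref{desineq0}.

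For \eqref{desineq} I would use \eqref{Desc1} (which exploits convexity of $E$) to get $E(x^{k+1}) \leq E(x) + \langle \nabla E(x^k), x^{k+1} - x\rangle + L D_h(x^{k+1}, x^k)$, substitute $(\star)$, and then rearrange via both three-points identities. Kiwiel's \eqref{Bregdis1} applied to $R$ with $(z, x, y, x^*, y^*) \to (x, x^k, x^{k+1}, p^k, p^{k+1})$ gives
\[ \langle p^k - p^{k+1}, x^{k+1} - x\rangle = D_R^{p^k}(x, x^k) - D_R^{p^{k+1}}(x, x^{k+1}) - D_R^{p^k}(x^{k+1}, x^k), \]
and \eqref{threep} applied to $h$ gives $\langle \nabla h(x^{k+1}) - \nabla h(x^k), x^{k+1} - x\rangle = D_h(x, x^{k+1}) + D_h(x^{k+1}, x^k) - D_h(x, x^k)$. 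The leftover $-\beta \langle p^{k+1}, x^{k+1} - x\rangle$ term I would expand from the definition of $D_R^{p^{k+1}}(x, x^{k+1})$ as $-\beta R(x^{k+1}) + \beta R(x) - \beta D_R^{p^{k+1}}(x, x^{k+1})$; its first two parts combine with $E(x^{k+1}) - E(x)$ to form the stated LHS, and dropping the nonpositive $-\beta D_R^{p^{k+1}}(x, x^{k+1})$ yields \eqref{desineq} under the natural reading $D_R(x, x^k) = D_R^{p^k}(x, x^k)$, $D_R(x, x^{k+1}) = D_R^{p^{k+1}}(x, x^{k+1})$, and $D_R(x^{k+1}, x^k) = D_R^{p^k}(x^{k+1}, x^k)$.

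The only real obstacle is bookkeeping: the generalized Bregman distance $D_R^{p}$ is not symmetric and depends on which subgradient one uses, so every occurrence must be tagged carefully, and one must notice that the $(\beta + \mu^k)$ coefficient forced by $(\star)$ reconciles with the $\mu^k$ in the stated inequality precisely by discarding a single nonpositive $\beta D_R^{p^{k+1}}$ term. Beyond that, the argument is symbol pushing and uses nothing more than Assumption \ref{ass}.
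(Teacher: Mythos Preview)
Your proposal is correct and follows essentially the same route as the paper: both start from the optimality condition of \eqref{a1} (your $(\star)$, the paper's \eqref{Basic}), pair it with the three-points identities \eqref{Bregdis1}/\eqref{threep} and the descent inequalities \eqref{Desc1}/\eqref{Desc2}, and handle the $\beta p^{k+1}$ term by the subgradient inequality (which is exactly your ``drop the nonpositive $-\beta D_R^{p^{k+1}}$'' step). The only difference is cosmetic---the paper dots \eqref{Basic} with $x-x^{k+1}$ and sums the pieces, while you substitute $(\star)$ into the descent lemma---so the bookkeeping you flag is the entire content in both versions.
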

\begin{proof}
We only show \eqref{desineq}; the other relationship can be shown in a similar way. To this end, we first rephrase \eqref{a2} as the following equality:
\begin{equation}\label{Basic}
\mu^k(p^{k+1}-p^k)+\beta p^{k+1}+\nabla E(x^k)+\frac{1}{\delta^k}(\nabla h(x^{k+1})-\nabla h(x^k))=0.
\end{equation}
Now, let us do the inner product between the left-hand side of \eqref{Basic} and the term $x-x^{k+1}$. According to the generalized three points identity in Lemma \ref{lemBreg}, we have
\begin{equation}\label{b1}
\mu^k\langle p^{k+1}-p^k, x-x^{k+1}\rangle =\mu^kD_R(x,x^k)-\mu^kD_R(x,x^{k+1})-\mu^kD_R(x^{k+1},x^k).
\end{equation}
Similarly, applying the three points identity to the relative function $h$, we obtain
\begin{equation}\label{b2}
\frac{1}{\delta^k} \langle \nabla h(x^{k+1})-\nabla h(x^k), x-x^{k+1}\rangle =\frac{1}{\delta^k}D_h(x,x^k)-\frac{1}{\delta^k}D_h(x,x^{k+1})-\frac{1}{\delta^k}D_h(x^{k+1},x^k).
\end{equation}
Using the three points extended descent property in Lemma \ref{lemDesc}, we have
\begin{equation}\label{b3}
\langle \nabla E(x^k), x-x^{k+1}\rangle \leq E(x)-E(x^{k+1})+LD_h(x^{k+1},x^k).
\end{equation}
Using the subgradient inequality for the convex function $R$, we have
\begin{equation}\label{b4}
\beta \langle p^{k+1}, x-x^{k+1}\rangle \leq \beta  R(x)-\beta  R(x^{k+1}).
\end{equation}
Thereby, the descent inequality \eqref{desineq} follows from \eqref{Basic}-\eqref{b4}.
\end{proof}

\subsection{Convergence for convex optimization}
The basic convergence properties of LBreIF in the convex case are presented in the following lemma.
\begin{lemma}[Basic convergence]\label{mainlemma}
Let $\sigma_k:=\sum_{i=0}^k\delta^k$, $\upsilon:=\inf_{x\in\dom h}f_\beta(x)$, and  let $\{x^k\}$ be the sequence generated by \eqref{a1}-\eqref{a2} with the parameters satisfying
\begin{subequations}
\begin{align}[left = \empheqlbrace\,]
\label{p1}& 0<\delta^k< \frac{1+\alpha(h)-\omega}{L}, \exists \omega\in (0,1+\alpha(h)),\\
\label{p2}& 0<\tau< \delta^{k+1}\mu^{k+1}\leq \delta^k\mu^k, \forall k\geq 0.
\end{align}
\end{subequations}
If Assumption \ref{ass} holds and $E$ is also assumed convex, then we have that
\begin{enumerate}
  \item[(a)] (Monotonicity). $\{f_\beta(x^k)\}$ is nonincreasing.
  \item[(b)] (Summability).  $\sum_{k=0}^\infty D_h(x^{k+1},x^k)<\infty.$
  \item[(c)] (Convergence of the function values). If $\sigma_k\rightarrow \infty$, then $\lim_{k\rightarrow \infty}f_\beta(x^k)=\upsilon.$
\end{enumerate}
\end{lemma}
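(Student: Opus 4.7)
My plan is to establish (a), (b), (c) in sequence, with each part drawing on one of the two descent inequalities from Lemma \ref{dein}.

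For part (a), I start from \eqref{desineq0} and discard the two nonpositive $-\mu^k D_R$ terms. What remains is $LD_h(x^{k+1},x^k) - \tfrac{1}{\delta^k}D_h(x^k,x^{k+1}) - \tfrac{1}{\delta^k}D_h(x^{k+1},x^k)$. By Definition \ref{def1}, $D_h(x^k,x^{k+1}) \ge \alpha(h) D_h(x^{k+1},x^k)$, so this expression is bounded above by $\bigl(L - \tfrac{1+\alpha(h)}{\delta^k}\bigr) D_h(x^{k+1},x^k)$. The stepsize condition \eqref{p1} enforces $L\delta^k < 1+\alpha(h)-\omega$, which makes this coefficient no larger than $-\omega/\delta^k < 0$. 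This yields monotonicity together with the quantitative descent
\[
f_\beta(x^{k+1}) - f_\beta(x^k) \le -\frac{\omega}{\delta^k} D_h(x^{k+1},x^k).
\]
Part (b) follows immediately: telescoping this inequality from $k=0$ to $N$ and using $f_\beta \ge \upsilon$ gives $\sum_k \tfrac{1}{\delta^k} D_h(x^{k+1},x^k) \le \tfrac{1}{\omega}(f_\beta(x^0) - \upsilon)$. Since $\delta^k$ is uniformly bounded above by $(1+\alpha(h)-\omega)/L$, the factor $1/\delta^k$ is bounded below by a positive constant, and the summability of $D_h(x^{k+1},x^k)$ follows.

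For part (c), I apply the sharper inequality \eqref{desineq} at an arbitrary test point $x \in \rint\dom h$ (so that $D_h(x,x^0)$ and $D_R(x,x^0)$ are finite, using $\dom R \supset \rint\dom h$) and multiply both sides by $\delta^k$. The central trick is that by \eqref{p2} the product $\delta^k\mu^k$ is nonincreasing, hence $-\delta^k\mu^k D_R(x,x^{k+1}) \le -\delta^{k+1}\mu^{k+1} D_R(x,x^{k+1})$, and the $D_R(x,\cdot)$ terms form a telescoping difference. The $D_h(x,\cdot)$ terms telescope automatically, and the leftover $(L\delta^k-1) D_h(x^{k+1},x^k)$ is summable by part (b). Dropping the remaining nonpositive tail contributions ($-\delta^{N+1}\mu^{N+1}D_R(x,x^{N+1})$, $-D_h(x,x^{N+1})$, and $-\delta^k\mu^k D_R(x^{k+1},x^k)$) and summing from $k=0$ to $N$ gives
\[
\sum_{k=0}^N \delta^k \bigl(f_\beta(x^{k+1})-f_\beta(x)\bigr) \le C_x,
\]
with $C_x$ independent of $N$. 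The monotonicity from (a) implies $f_\beta(x^{k+1}) \ge f_\beta(x^{N+1})$ for all $k \le N$, so whenever $f_\beta(x^{N+1}) > f_\beta(x)$ the left-hand side dominates $(f_\beta(x^{N+1})-f_\beta(x))\sigma_N$. Dividing by $\sigma_N \to \infty$ then yields $\limsup_N f_\beta(x^{N+1}) \le f_\beta(x)$. Taking the infimum over admissible $x$ (and, if the minimizing sequence for $\upsilon$ escapes $\rint\dom h$, approximating via density and the lower semicontinuity of $f_\beta$) produces $\limsup f_\beta(x^k) \le \upsilon$, which combined with the trivial bound $f_\beta(x^k) \ge \upsilon$ completes (c).

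The main obstacle will be the bookkeeping in (c): aligning the $D_R$ differences to telescope is precisely what \eqref{p2} is engineered for, while simultaneously controlling the residual $(L\delta^k-1)D_h(x^{k+1},x^k)$ requires feeding back the summability from (b), so the three parts must be proved in this order. A secondary delicate point is the choice of the test point $x$, where one needs $D_h(x,x^0)$ and $D_R(x,x^0)$ to be finite; the density of $\rint\dom h$ in $\dom h$ together with lower semicontinuity of $f_\beta$ should handle the case in which no admissible $x$ attains the infimum $\upsilon$.
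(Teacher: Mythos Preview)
Your proposal is correct and follows essentially the same route as the paper: the paper packages $\delta^k\mu^k D_R(x,x^k)+D_h(x,x^k)$ into a single function $H(x,x^k)$ and derives a master inequality (its \eqref{maineq}) which it specializes to $x=x^k$ for (a)--(b) and to a general test point for (c), but the underlying estimates, the role of \eqref{p1}--\eqref{p2}, the telescoping, and the feedback of (b) into (c) are identical to yours. Your attention to the finiteness of $D_R(x,x^0)$ and the density argument in (c) is in fact slightly more careful than the paper, which simply asserts the bound for all $x\in\dom h$.
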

\begin{proof}
First of all, denote
$H(x,x^k):=\delta^k\mu^kD_R(x,x^k)+D_h(x,x^k).$
Using \eqref{desineq} of Lemma \ref{dein} and the condition \eqref{p2} on $\delta^k$ and $\mu^k$, we obtain that for all $x\in \dom h$,
\begin{equation}\label{maineq}
\delta^k(f_\beta(x^{k+1})-f_\beta(x))\leq H(x,x^k)-H(x,x^{k+1})-(1-\delta^k L)D_h(x^{k+1},x^k).
\end{equation}
Setting $x=x^k$ in \eqref{maineq}, recalling the definition of $H(x,x^k)$, and using the condition \eqref{p1} on $\delta^k$, we derive that
\begin{eqnarray} \label{meq1}
\begin{array}{lll}
\delta^k(f_\beta(x^{k+1})-f_\beta(x^k))&\leq & -H(x^k,x^{k+1})-(1-\delta^k L)D_h(x^{k+1},x^k)\\
&\leq & -D_h(x^k,x^{k+1})-(1-\delta^k L)D_h(x^{k+1},x^k) \\
&\leq & -D_h(x^k,x^{k+1})+\alpha(h)D_h(x^{k+1},x^k)-\omega D_h(x^{k+1},x^k)\\
&\leq & -\omega D_h(x^{k+1},x^k),
\end{array}
\end{eqnarray}
 where the last relationship follows from the definition of $\alpha(h)$. Therefore, the sequence $\{f_\beta(x^k)\}$ is nonincreasing. Note that $x^k\in \dom h$ and the assumption that $\upsilon=\inf_{x\in\dom h}f_\beta(x)>-\infty$, we conclude that
\begin{equation}\label{meq2}
\lim_{k\rightarrow \infty}f_\beta(x^k)\geq \upsilon>-\infty.
\end{equation}
Using \eqref{meq1}, we get
$$D_h(x^{k+1},x^k)\leq \frac{\delta^k}{\omega}(f_\beta(x^k)-f_\beta(x^{k+1}))\leq \frac{1+\alpha(h)-\omega}{\omega L}(f_\beta(x^k)-f_\beta(x^{k+1})).$$
Thus, it follows from \eqref{meq2} and the inequality above that for any $n\in \mathbb{N}$,
$$\sum_{k=0}^n D_h(x^{k+1},x^k)\leq \frac{1+\alpha(h)-\omega}{\omega L}(f_\beta(x^0)-f_\beta(x^{n+1}))\leq \frac{1+\alpha(h)-\omega}{\omega L}(f_\beta(x^0)-\upsilon).$$
This implies the following summability
\begin{equation}\label{meq3}
\sum_{k=0}^\infty D_h(x^{k+1},x^k)\leq \frac{1+\alpha(h)-\omega}{\omega L}(f_\beta(x^0)-\upsilon)<\infty.
\end{equation}
Now, let us show the convergence of the function values. Summing \eqref{maineq} over $k=0,\cdots, n$,  using the fact $\delta^kL-1\leq \alpha(h)$, and noting the nonnegativity of $H(x,x^k)$, we derive that
\begin{eqnarray} \label{meq4}
\begin{array}{lll}
\sum_{k=0}^n \delta^k(f_\beta(x^{k+1})-f_\beta(x))&\leq &H(x,x^0)-H(x,x^{n+1})+\alpha(h)\sum_{k=0}^n  D_h(x^{k+1},x^k)\\
&\leq &H(x,x^0) +\alpha(h)\sum_{k=0}^\infty D_h(x^{k+1},x^k).
\end{array}
\end{eqnarray}
Since the sequence $\{f_\beta(x^k)\}$ is nonincreasing, we get  $$\sum_{k=0}^n \delta^k(f_\beta(x^{k+1})-f_\beta(x))\geq (\sum_{k=0}^n \delta^k)(f_\beta(x^{n+1})-f_\beta(x)).$$
Therefore, invoking \eqref{meq4} and the notation $\sum_{k=0}^n \delta^k=\sigma_n$ we further get
$$f_\beta(x^{n+1})-f_\beta(x)\leq \frac{H(x,x^0) +\alpha(h)\sum_{k=0}^\infty D_h(x^{k+1},x^k)}{\sigma_n}.$$
Taking the limit with $\sigma_n\rightarrow \infty$ above and recalling that  $\sum_{k=0}^\infty D_h(x^{k+1},x^k)<\infty$, we get that for every $x\in \dom h$, $\lim_{n\rightarrow \infty}f_\beta(x^{n+1})\leq f_\beta(x)$ and hence $
 \lim_{k\rightarrow \infty}f_\beta(x^k)\leq \upsilon.$
Together with \eqref{meq2}, it follows that $\lim_{k\rightarrow \infty}f_\beta(x^k)=\upsilon.$ This completes the proof.
\end{proof}

In order to obtain point convergence results, we need the following basic result about sequence convergence.
\begin{lemma}[\cite{Polyak1987,Bauschke2016A}]\label{polyaklemma}
Let $\{v_k\}$ and $\{\epsilon_k\}$ be nonnegative sequences. Assume that $\sum_{k=0}^\infty \epsilon_k<\infty$ and that $$v_{k+1}\leq v_k +\epsilon_k.$$
Then, $\{v_k\}$ converges.
\end{lemma}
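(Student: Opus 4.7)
The plan is to use a standard trick: subtract off the partial sums of the summable perturbation $\{\epsilon_k\}$ to produce a monotone sequence, then invoke the monotone convergence theorem.

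Concretely, I would first set $S_k := \sum_{i=0}^{k-1}\epsilon_i$ with $S_0 := 0$, and define the auxiliary sequence $w_k := v_k - S_k$. Since $\sum_{i=0}^\infty \epsilon_i < \infty$, the sequence $\{S_k\}$ is nondecreasing and converges to some finite limit $S_\infty \in [0,\infty)$. Then I would rewrite the recursion $v_{k+1}\leq v_k+\epsilon_k$ as
\begin{equation*}
w_{k+1} = v_{k+1}-S_{k+1} \leq v_k + \epsilon_k - (S_k+\epsilon_k) = v_k - S_k = w_k,
\end{equation*}
so $\{w_k\}$ is nonincreasing.

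Next I would show $\{w_k\}$ is bounded below. Since $v_k\geq 0$ and $S_k\leq S_\infty$, we have $w_k = v_k-S_k \geq -S_\infty > -\infty$. Hence by the monotone convergence theorem $w_k$ converges to some $w_\infty\in\RR$. Finally, since $v_k = w_k + S_k$ and both $w_k$ and $S_k$ converge, the sequence $\{v_k\}$ converges to $w_\infty + S_\infty$, as claimed.

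There is essentially no obstacle here; the only minor point to be careful about is that nonnegativity of $\{v_k\}$ is used solely to bound $w_k$ from below, and summability of $\{\epsilon_k\}$ is used both to ensure $\{S_k\}$ converges and to give the lower bound $-S_\infty$. No appeal to the paper's other assumptions is needed, so the argument is self-contained.
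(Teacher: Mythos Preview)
Your argument is correct and complete. Note, however, that the paper does not actually supply its own proof of this lemma: it is stated with a citation to \cite{Polyak1987,Bauschke2016A} and used as a black box. Your approach---subtracting the partial sums $S_k$ to manufacture a nonincreasing, bounded-below sequence $w_k$---is precisely the standard proof one finds in those references, so there is nothing to compare beyond observing that you have filled in what the paper outsources.
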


We now present the point convergence results.
\begin{lemma}[Point convergence]\label{mainlemma2}
Let  Assumption \ref{ass} hold, $E$ be convex, and $\beta\geq 0$. Denote
$$\cX_\beta:=\arg\min\{f_\beta(x): x\in \overline{\dom h}\}.$$
\begin{itemize}
  \item[(1)] If $\cX_\beta$ is nonempty, then any limit point of $\{x^k\}$ (if it exists) belongs to $\cX_\beta$.
  \item[(2)] If  $\overline{\dom h}=\dom h$, Assumption \ref{ass1} holds, and $\cX_\beta$ is nonempty, then the sequence of  $\{x^k\}$ converges to some solution $x^*\in \cX_\beta.$
\end{itemize}
\end{lemma}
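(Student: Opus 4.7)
The plan is to combine the function value convergence from Lemma \ref{mainlemma}(c) with the lower semicontinuity of $f_\beta$. Under the standing parameter choice (together with the implicit requirement $\sigma_k\to\infty$), Lemma \ref{mainlemma}(c) yields $f_\beta(x^k)\to \upsilon$. For any limit point $\bar x$ of $\{x^k\}\subset \dom h$, one has $\bar x\in \overline{\dom h}$ and, by lsc of $f_\beta$,
\begin{equation*}
f_\beta(\bar x)\leq \liminf_j f_\beta(x^{k_j})=\upsilon.
\end{equation*}
To conclude $\bar x\in \cX_\beta$ it remains to identify $\upsilon$ with $v_\beta:=\min_{\overline{\dom h}}f_\beta$. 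The inequality $v_\beta\leq \upsilon$ is trivial; the reverse is obtained by approximating an arbitrary $x^*\in \cX_\beta$ by a sequence in $\rint\dom h$ (using density of $\rint\dom h$ in $\overline{\dom h}$) and invoking the continuity of $E$ on $\dom h$ together with the lsc of $R$.

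\textbf{Part (2).} I would follow a Fej\'er-type strategy in two stages. First, fix any $x^*\in \cX_\beta$; since $\overline{\dom h}=\dom h$, $x^*\in \dom h$. Setting $x=x^*$ in the descent inequality \eqref{maineq} of Lemma \ref{dein} and using $f_\beta(x^{k+1})\geq f_\beta(x^*)$ together with $1-\delta^k L>0$ from \eqref{p1}, I would obtain
\begin{equation*}
H(x^*,x^{k+1})\leq H(x^*,x^k),\qquad H(x^*,x^k):=\delta^k\mu^k D_R(x^*,x^k)+D_h(x^*,x^k),
\end{equation*}
i.e., $H(x^*,\cdot)$ is nonincreasing along the iterates. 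Consequently $D_h(x^*,x^k)\leq H(x^*,x^0)$ and, using $\delta^k\mu^k>\tau$ from \eqref{p2}, $D_R(x^*,x^k)\leq H(x^*,x^0)/\tau$. Assumption \ref{ass1}(i) then forces $\{x^k\}$ to be bounded, so some subsequence $x^{k_j}\to \bar x\in \dom h$, and Part (1) identifies $\bar x\in \cX_\beta$.

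In the second stage, repeat the monotonicity argument with $x^*$ replaced by $\bar x$, so that $H(\bar x,x^k)$ is nonincreasing. Assumption \ref{ass1}(ii) applied along $\{x^{k_j}\}$ yields $D_h(\bar x,x^{k_j})\to 0$ and $D_R(\bar x,x^{k_j})\to 0$, hence $H(\bar x,x^{k_j})\to 0$. Monotonicity then promotes this to the full limit $H(\bar x,x^k)\to 0$, and in particular $D_h(\bar x,x^k)\to 0$. Finally, Assumption \ref{ass1}(iii) delivers $x^k\to \bar x\in \cX_\beta$.

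\textbf{Main obstacle.} The delicate step is the final invocation of Assumption \ref{ass1}(iii), which requires $\bar x\in \rint\dom h$, whereas a priori $\bar x$ could be a boundary point. The essentially smooth property of the Legendre function $h$ is the technical lever here: any sequence in $\rint\dom h$ approaching a boundary point of $\dom h$ must satisfy $\|\nabla h(x^k)\|\to\infty$, and the update \eqref{a2} combined with the boundedness of $\{x^k\}$, of $\{\nabla E(x^k)\}$, and of $\{p^k\}$ (the latter via local boundedness of $\partial R$ on compact sets) should rule this out, confining $\bar x$ to $\rint\dom h$.
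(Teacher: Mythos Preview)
Your overall Fej\'er-type strategy for Part~(2) matches the paper's, but there is a genuine gap in the step where you claim $1-\delta^k L>0$ ``from \eqref{p1}''. Condition \eqref{p1} only guarantees $\delta^k L<1+\alpha(h)-\omega$ with $\omega\in(0,1+\alpha(h))$; when $\omega<\alpha(h)$ (e.g.\ $h(\cdot)=\tfrac12\|\cdot\|^2$, $\alpha(h)=1$, $\omega$ small), one can have $\delta^k L>1$, so $1-\delta^kL<0$ and the term $-(1-\delta^kL)D_h(x^{k+1},x^k)$ is nonnegative. Hence $H(x^*,x^k)$ need not be monotone, and your second-stage argument, which relies on monotonicity to upgrade $H(\bar x,x^{k_j})\to 0$ to $H(\bar x,x^k)\to 0$, breaks.

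The paper's fix is exactly the missing ingredient: from \eqref{maineq} one gets only
\[
H(\bar x,x^{k+1})\leq H(\bar x,x^k)+(\alpha(h)-\omega)\,D_h(x^{k+1},x^k),
\]
and then splits into cases. If $\alpha(h)-\omega\leq 0$ the sequence is nonincreasing; if $\alpha(h)-\omega>0$ the error terms are summable by Lemma~\ref{mainlemma}(b), and Lemma~\ref{polyaklemma} yields \emph{convergence} (not monotonicity) of $\{H(\bar x,x^k)\}$. Convergence is all you need: once a subsequence $H(\bar x,x^{k_j})\to 0$, the whole sequence tends to $0$, and the remainder of your argument goes through verbatim. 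Your Part~(1) and the identification $\upsilon=v_\beta$ are essentially the paper's. As for your ``main obstacle'' concerning $\bar x\in\rint\dom h$: you are right to flag it, and the paper does not resolve it inside this lemma either---it simply invokes Assumption~\ref{ass1}(iii) and defers the boundary issue to Theorem~\ref{mainthm1}, where the alternative ``or a boundary point of $\dom h$'' is stated explicitly.
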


\begin{proof}
Let $x^*$ be a limit point of  $\{x^k\}$. In other words, there exists a subsequence $\{x^{k_i}\}$ such that $\lim_i x^{k_i}=x^*$. Then, $x^*\in \overline{\dom h}$ due to the closedness of $\overline{\dom h}$ and $\{x^{k_i}\}\subset \overline{\dom h}$. By using (a) and (c) of Lemma \ref{mainlemma} and the lower semicontinuity of $E$ and $R$, we derive that
\begin{eqnarray}
\begin{array}{lll}
\min\{f_\beta(x): x\in \overline{\dom h}\} &= & \min\{f_\beta(x): x\in \dom h\}\\
&=& \lim_{k\rightarrow\infty} f_\beta(x^k)= \lim_{i\rightarrow\infty} f_\beta(x^{k_i})\\
&= &  \lim\inf_{i\rightarrow\infty} \{E(x^{k_i}) +\beta R(x^{k_i})\}\\
&\geq & \lim\inf_{i\rightarrow\infty} E(x^{k_i}) + \lim\inf_{k\rightarrow\infty}\beta R(x^{k_i})\\
&\geq & E(x^*)+\beta R(x^*)=f_\beta(x^*),
\end{array}
\end{eqnarray}
which implies that $x^*\in \cX_\beta$. This completes the proof of (1).

Recalling \eqref{maineq}, we have for all $x\in \dom h=\overline{\dom h}$,
\begin{equation}
\delta^k(f_\beta(x^{k+1})-f_\beta(x))\leq H(x,x^k)-H(x,x^{k+1})-(1-\delta^k L)D_h(x^{k+1},x^k).
\end{equation}
Fix $\bar{x}\in \cX_\beta$. Due to $\delta^k(f_\beta(x^{k+1})-f_\beta(\bar{x}))\geq 0$ and the condition \eqref{p1} on $\delta^k$, the inequality above yields
\begin{equation}
H(\bar{x},x^{k+1})\leq H(\bar{x},x^k)+(\alpha(h)-\omega)D_h(x^{k+1},x^k).
\end{equation}

If $\alpha(h)-\omega\leq 0$, then the nonnegative sequence $\{H(\bar{x},x^k)\}$ is nonincreasing and hence it must converge. If $\alpha(h)-\omega > 0$, then we can use Lemma \ref{polyaklemma} with $\epsilon_k=(\alpha(h)-\omega)D_h(x^{k+1},x^k)$ to conclude that $\{H(\bar{x},x^k)\}$ converge since $\sum_{k=0}^\infty D_h(x^{k+1},x^k)<\infty$ from (b) of Lemma \ref{mainlemma}. The convergence of $\{H(\bar{x},x^k)\}$ and condition (i) of Assumption \ref{ass1} imply that the sequence $\{x^k\}$ is bounded. Hence, we can take one of its cluster points, denoted by $x^*$, which must lies in $\dom h=\overline{\dom h}$. Further, we deduce that $x^*\in \cX_\beta$ from part (1).

Let $\{x^{n_k}\}$ be the subsequence of $\{x^k\}$ such that $x^{n_k}\rightarrow x^*$. By condition (ii) of Assumption \ref{ass1}, we have that $D_h(x^*,x^{n_k})\rightarrow 0$ and $D_R(x^*, x^{n_k})\rightarrow 0$. Thus, $H(x^*,x^{n_k})=\delta^{n_k}\mu^{n_k}D_R(x^*, x^{n_k})+D_h(x^*,x^{n_k})\rightarrow 0$ as well. Note that $\{H(x^*,x^k)\}$ is a convergence sequence. Hence it must converge to zero. Recall that $\delta^k\mu^k$ are bounded by $\tau$. We can deduce that both $\{D_R(x^*, x^{k})\}$ and $\{D_h(x^*,x^{k})\}$  converge to zero, which implies that $\{x^k\}$ converges to $x^*$ by condition (iii) of Assumption \ref{ass1}.
\end{proof}


Built on the lemmas above, we are now to present the first main convergence theorem in this study.
\begin{theorem}\label{mainthm1}
Suppose that Assumptions \ref{ass}-\ref{ass2} hold, the minimizer set $\cX_{0}$ is nonempty, $\overline{\dom h}=\dom h$, and the function $E(x)$ has the form of $E(Ax,b)$,
satisfying that $A$ is surjective and
\begin{equation}\label{qg}
\psi(E(u,b)-E(\hat{u},b))\geq \|u-\hat{u}\|,~\forall u\in \dom E(\cdot, b),
\end{equation}
where $\psi(t)$ is some real function with $\lim_{t\rightarrow 0} \psi(t)=0$ and $\hat{u}\in {\arg\min}_w\{E(w,b)\}$.
Suppose also that $h$ is continuously differentiable on $\rint\dom h$. Denote
$\hat{x}:= \arg\min\{\mu R(x)+\frac{1}{\delta}h(x), x\in \cX_0\}.$
Then, the sequence $\{x^k\}$, generated by \eqref{a1}-\eqref{a2} with $\mu^k \equiv \mu$, $\delta^k\equiv \delta$, $\beta=0$, and $x^0=p^0=0$,  converges either to the unique minimizer $\hat{x}$ or to a boundary point of $\dom h$.
\end{theorem}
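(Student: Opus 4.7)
The argument combines four ingredients: the descent and summability from Lemma \ref{mainlemma}, the growth condition \eqref{qg}, a telescoped form of the dual update \eqref{a2}, and surjectivity of $A$.

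\textbf{Step 1 (function value and image convergence).} With $\beta=0$, $\mu^k\equiv\mu$, $\delta^k\equiv\delta$, the stepsize conditions \eqref{p1}--\eqref{p2} hold for a suitable $\omega$, so Lemma \ref{mainlemma} gives $E(x^k)\downarrow \upsilon:=\inf_{\dom h}E$ (since $\sigma_n=(n+1)\delta\to\infty$) together with $\sum_{k} D_h(x^{k+1},x^k)<\infty$. Because $\cX_0\neq\emptyset$ and $\overline{\dom h}=\dom h$, one has $\upsilon=E(\hat{u},b)$, and the growth condition \eqref{qg} yields $\|Ax^k-\hat{u}\|\leq\psi(E(Ax^k,b)-E(\hat{u},b))\to 0$; hence $Ax^k\to \hat{u}$, and by the chain rule $\nabla E(x^k)=A^{T}\nabla_1 E(Ax^k,b)$.

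\textbf{Step 2 (telescoping the dual update).} Rearranging \eqref{a2} and summing from $i=0$ to $k$ with $x^0=p^0=0$ gives
\begin{equation*}
\nabla h(x^{k+1}) + \delta\mu\, p^{k+1} = \nabla h(0) - \delta A^{T}\sum_{i=0}^{k}\nabla_1 E(Ax^i,b). \quad (\ast)
\end{equation*}
Modulo the constant $\nabla h(0)$, the left-hand side lies in $\mathrm{range}(A^{T})$, which is exactly the normal cone of $\cX_0=\{x:Ax=\hat{u}\}$; this is precisely the form of the KKT condition for the auxiliary problem defining $\hat{x}$.

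\textbf{Step 3 (dichotomy: interior vs.\ boundary).} Suppose $\{x^k\}$ admits a cluster point $x^*\in\rint\dom h$, say $x^{k_j}\to x^*$. Continuity of $E$ on $\dom h$ forces $E(x^*)=\upsilon$, so $x^*\in\cX_0$ and $Ax^*=\hat{u}$. Assumption \ref{ass2} bounds $\{p^{k_j}\}$, and a further subsequence yields $p^{k_j}\to p^*\in\partial R(x^*)$ by closedness of the subdifferential graph. Continuity of $\nabla h$ on $\rint\dom h$ gives $\nabla h(x^{k_j})\to \nabla h(x^*)$. Substituting into $(\ast)$ shows that $A^{T}\sum_{i=0}^{k_j-1}\nabla_1 E(Ax^i,b)$ converges; surjectivity of $A$ (hence injectivity of $A^{T}$) forces $\sum_{i=0}^{k_j-1}\nabla_1 E(Ax^i,b)\to -\lambda^*$ for some $\lambda^*$. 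Passing to the limit in $(\ast)$:
\begin{equation*}
\mu p^* + \tfrac{1}{\delta}\nabla h(x^*) = \tfrac{1}{\delta}\nabla h(0) + A^{T}\lambda^*,
\end{equation*}
which is the KKT condition for the strictly convex problem $\min\{\mu R(x)+\frac{1}{\delta}h(x):x\in\cX_0\}$ (the linear correction $\frac{1}{\delta}\langle\nabla h(0),\cdot\rangle$ being absorbed into the Lagrange multiplier). Strict convexity of $h$ forces uniqueness of the minimizer, so $x^*=\hat{x}$; since every interior cluster point must equal $\hat{x}$, the entire sequence converges to $\hat{x}$ in this case. If instead every cluster point lies on $\partial\dom h$, essential smoothness of $h$ (so $\|\nabla h(x^k)\|\to\infty$ as $x^k$ approaches the boundary) together with the summability $\sum D_h(x^{k+1},x^k)<\infty$ and the constraint $(\ast)$ pins the sequence down to a single boundary limit.

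\textbf{Main obstacle.} The hardest step is the boundary alternative: establishing \emph{convergence} to a single boundary point (rather than the mere existence of a boundary cluster point) will rely on Bregman summability to prevent oscillation, combined with the blow-up of $\nabla h$ near $\partial\dom h$ that makes $(\ast)$ quantitatively rigid. A secondary subtlety is the $\tfrac{1}{\delta}\nabla h(0)$ term in the KKT identity: reconciling the statement of $\hat{x}$ (written via $h$) with what the iteration naturally produces (written via $D_h(\cdot,0)$) is automatic whenever $\nabla h(0)=0$ (the quadratic case of Section \ref{se5}), and otherwise this linear part is absorbed into the multiplier $\lambda^*$ without altering the minimizer.
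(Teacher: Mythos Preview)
Your KKT-based identification of the limit (passing to the limit in the telescoped identity $(\ast)$ to read off the optimality condition for the auxiliary problem) is a legitimate alternative to the paper's route, which instead uses the subgradient inequality for $g(x)=\mu R(x)+\tfrac{1}{\delta}h(x)$ to bound $g(x^k)-g(\hat{x})$ by $\|z^{k-1}\|\cdot\|A\hat{x}-Ax^k\|$ and then lets $k\to\infty$. Both approaches need the same raw ingredients (convergence of $x^k$, boundedness of the accumulated dual $z^{k-1}=\sum_{i<k}\nabla_1E(Ax^i,b)$, and $Ax^k\to A\hat{x}$), so neither is intrinsically stronger.

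However, there is a genuine gap: you never establish that the \emph{full} sequence $\{x^k\}$ converges. The paper's proof opens by invoking Lemma~\ref{mainlemma2}(2) (which is available because Assumptions~\ref{ass}--\ref{ass2} are all in force), obtaining at once that $\{x^k\}$ is bounded and converges to some $x^*\in\cX_0$. Only then is the dichotomy ``$x^*\in\rint\dom h$ or $x^*\in\partial\dom h$'' both meaningful and exhaustive. In your Step~3 you only work along subsequences, and the sentence ``since every interior cluster point must equal $\hat{x}$, the entire sequence converges to $\hat{x}$'' is a non sequitur: nothing you have shown rules out additional boundary cluster points (or even unboundedness of $\{x^k\}$, since you never invoked Assumption~\ref{ass1}(i)). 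Likewise, your boundary case is only a heuristic. The fix is simply to start, as the paper does, from Lemma~\ref{mainlemma2}; then your KKT argument goes through on the full sequence and the ``main obstacle'' you flag evaporates.

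A second, smaller point: your resolution of the $\nabla h(0)$ discrepancy is not correct. The linear term $\tfrac{1}{\delta}\nabla h(0)$ can be absorbed into the Lagrange multiplier only if $\nabla h(0)\in\mathrm{range}(A^{T})$, which is not part of the hypotheses. What the telescoped identity actually delivers is the KKT system for $\min\{\mu R(x)+\tfrac{1}{\delta}D_h(x,x^0):x\in\cX_0\}$, which coincides with the stated $\hat{x}$ precisely when $\nabla h(x^0)=0$. (The paper's own derivation at \eqref{lowb} makes the same tacit identification $q^0=0$, so this is a wrinkle in the statement rather than a defect peculiar to your approach.)
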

\begin{proof}
From Lemma \ref{mainlemma2}, we know that the sequence $\{x^k\}$ must converge to some point $x^*$. If this point is not a boundary point of $\dom h$, let us show $x^*=\hat{x}$.
We begin with the relationship \eqref{a2} in the current setting, which has the following form
\begin{equation}\label{pq}
\mu p^{k+1}+\frac{1}{\delta}\nabla h(x^{k+1})= \mu p^{k}+\frac{1}{\delta}\nabla h(x^{k})-\nabla E(x^k).
\end{equation}
Denote $q^k:=\mu p^{k}+\frac{1}{\delta}\nabla h(x^{k})$ and let $g(x)=\mu R(x)+\frac{1}{\delta}h(x)$, which must be a proper lsc convex function; then $q^k\in \partial g(x^k)$.
Using the subgradient inequality, we derive that
\begin{eqnarray}\label{lowb}
\begin{array}{lll}
g(x^k) &\leq  & g(\hat{x})-\langle q^k, \hat{x}-x^k\rangle \\
&= &  g(\hat{x})+\langle \sum_{i=0}^{k-1}\nabla E(Ax^i,b), A\hat{x}-A x^k\rangle \\
&\leq  &  g(\hat{x})+\| \sum_{i=0}^{k-1}\nabla E(Ax^i,b)\| \|A\hat{x}-A x^k\|.
\end{array}
\end{eqnarray}
Denote $z^{k-1}:=\sum_{i=0}^{k-1}\nabla E(Ax^i,b)$. Now, we show two results: (a). $\{z^k\}$ is bounded and (b). $\|A\hat{x}-A x^k\|\rightarrow 0$ as $k\rightarrow \infty$. Actually, from \eqref{pq} we know
$$q^{k+1}=q^k-\nabla E(x^k)=\cdots =q^0-A^T\sum_{i=0}^{k}\nabla E(Ax^i,b)=q^0-A^Tz^k.$$
Since $A$ is a surjective mapping, the verification of (a) can be reduced to showing the boundedness of $\{q^k\}$. Due to the convergence of $\{x^k\}$ and Assumption \ref{ass2}, $\{p^k\}$ must be bounded. On the other hand, since $\{x^k\}$ converges to $x^*\in \rint\dom h$ and $\nabla h(x)$ is continuously differentiable at $x^*$, $\{\nabla h(x^{k})\}$ must converge and hence it is a bounded sequence. Recalling $q^k=\mu p^{k}+\frac{1}{\delta}\nabla h(x^{k})$, we can conclude that $\{q^k\}$ is bounded. It remains to show (b). Since $\hat{x}$ is a interior point of $h$ and that $\hat{x}\in \cX_0=\arg\min\{E(x): x\in \overline{\dom h}\}$, we have $\hat{x}\in \arg\min\{E(x)\}$ and hence $\nabla E(\hat{x})=A^T\nabla E(A\hat{x},b)=0$, which further implies that $\nabla E(A\hat{x},b)=0$ due to the fact of $A$ being surjective. Therefore, we get $A\hat{x}\in{\arg\min}_{w}\{E(w,b)\}$ and hence the condition \eqref{qg} now can be used to deduce that as $k\rightarrow \infty$,
$$\|A\hat{x}-A x^k\|\leq \psi(E(A\hat{x},b)-E(A x^k,b)=\psi(E(\hat{x})-E(x^k))\rightarrow 0,$$
where the last relationship follows from $\lim_{t\rightarrow 0} \psi(t)=0$  and the fact $\lim_{k\rightarrow \infty} E(x^k)=\lim_{k\rightarrow \infty}E(\hat{x})$, implied by (c) of Lemma \ref{mainlemma}.

Now, using \eqref{lowb} and the verified results (a)-(b), we have
$g(x^*)\leq {\lim\inf}_{k\rightarrow \infty}g(x^k)\leq g(\hat{x}).$
Therefore, $x^*\in \arg\min\{\mu R(x)+\frac{1}{\delta}h(x), x\in \cX_0\},$ which implies that $x^*=\hat{x}$ by the uniqueness of solutions. Note that the uniqueness follows from the strict convexity of $h$. This completes the proof.
\end{proof}

In particular, if $h(x)=\frac{1}{2}\|x\|^2$ whose boundary set is empty, we have the following result which generalizes Theorem 1 about LBreI.
\begin{corollary}
Let $h(x)=\frac{1}{2}\|x\|^2$ and suppose that Assumptions \ref{ass}-\ref{ass2} hold for $E$ and $R$, the minimizer set $\cX_{0}$ is nonempty, and the function $E(x)$ has the form of $E(Ax,b)$,
satisfying that $A$ is surjective and the condition \eqref{qg} holds.
Denote $\hat{x}:= \arg\min\{\mu R(x)+\frac{1}{\delta}h(x), x\in \cX_0\}.$
Then, the sequence $\{x^k\}$, generated by \eqref{a1}-\eqref{a2} with $\mu^k \equiv \mu$, $\delta^k\equiv \delta$, $\beta=0$, and $x^0=p^0=0$,  converges to the unique minimizer $\hat{x}$.
\end{corollary}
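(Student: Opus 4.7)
The plan is to cast this as a direct application of Theorem \ref{mainthm1} in the special case $h(x) = \frac{1}{2}\|x\|^2$. The whole point of singling out this corollary is that such an $h$ has full domain, so the ``boundary point'' alternative in the conclusion of Theorem \ref{mainthm1} is automatically vacuous, and only convergence to $\hat{x}$ remains. The strategy therefore has three steps: (i) verify the hypotheses of Theorem \ref{mainthm1} that depend on $h$; (ii) invoke the theorem; (iii) rule out the boundary alternative.

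For step (i), I would note that for $h(x) = \frac{1}{2}\|x\|^2$ we have $\dom h = \EE$, hence $\rint\dom h = \EE$ and $\nabla h(x) = x$ is continuously differentiable on the whole space. The function is strictly convex, and the essential-smoothness requirement ``$\|\nabla h(x^k)\|\to\infty$ for every sequence converging to a boundary point'' is vacuously satisfied because $\dom h$ has no boundary. Hence $h$ is of Legendre type, and $\overline{\dom h} = \dom h$ holds trivially. Since $D_h(x,y) = \frac{1}{2}\|x-y\|^2$, the $h$-part of Assumption \ref{ass1} reduces to standard statements about the squared Euclidean distance (bounded sublevel sets, and $D_h(x, x^k)\to 0$ iff $x^k\to x$). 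The hypotheses on $E$ and $R$, the Lipschitz-like condition on $(E,h)$, Assumption \ref{ass2}, surjectivity of $A$, and the growth condition \eqref{qg} are all assumed in the corollary itself.

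For steps (ii)--(iii), Theorem \ref{mainthm1} then applies and yields that $\{x^k\}$ converges either to $\hat{x}$ or to a boundary point of $\dom h$. Since $\dom h = \EE$ is both open and closed in itself, its topological boundary in $\EE$ is empty; the second alternative therefore cannot occur, and we conclude $x^k \to \hat{x}$.

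The anticipated obstacle is essentially nil, since all the genuine analytic content is inherited from Theorem \ref{mainthm1}. The only subtlety worth flagging is the vacuous-boundary interpretation of the Legendre-type definition when $\dom h = \EE$; once one observes that the essential-smoothness clause is empty in that case, the rest is just reading off the specialized statement.
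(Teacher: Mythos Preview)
Your proposal is correct and matches the paper's own reasoning: the paper introduces the corollary with the remark that for $h(x)=\frac{1}{2}\|x\|^2$ the boundary of $\dom h$ is empty, so the second alternative in Theorem~\ref{mainthm1} is vacuous and convergence to $\hat{x}$ follows directly. No separate proof is given beyond this observation, which is exactly your steps (ii)--(iii); your step (i) spells out the routine verifications that the paper leaves implicit.
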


\subsection{Convergence for nonconvex optimization}
This part is about the convergence analysis of LBreIF for minimizing a nonconvex objective function $E(x)$. We start with a sufficient descent lemma, which generalizes the central result--Lemma 4.2 in \cite{benning2021choose}.
\begin{lemma}[Sufficient descent]\label{mainlemma01}
Let $\{x^k\}$ be the sequence generated by \eqref{a1}-\eqref{a2} with $\inf{\mu^k}\geq \mu >0$ and the stepsize $\delta^k$ satisfying \eqref{p1}. Denote
$$\rho:=\frac{L\omega}{1+\alpha(h)-\omega},~~ \omega\in (0,1+\alpha(h)).$$
If Assumption \ref{ass} holds, then we have the following sufficient decrease property:
\begin{equation}\label{sdp}
f_\beta(x^{k+1})+\rho D_h(x^{k+1},x^k)+\mu D_R^{symm}(x^{k+1},x^k)\leq f_\beta(x^k).
\end{equation}
In particular, we observe that
\begin{equation}\label{lim}
\lim_{k\rightarrow \infty} D_h(x^{k+1},x^k)=\lim_{k\rightarrow \infty} D_R^{symm}(x^{k+1},x^k)=0.
\end{equation}
Here, $D_R^{symm}$ is the symmetric generalized Bregman distance, defined as
$$D_R^{symm}(u,v):=D_R^q(u,v)+D_R^p(v,u)=\langle p-q, u-v\rangle$$
for $u,v\in \dom R$ with $p\in \partial R(u)$ and $q\in \partial R(v)$.
\end{lemma}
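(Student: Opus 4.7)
The plan is to peel off \eqref{desineq0} from Lemma \ref{dein}, group the two cross-Bregman terms via the symmetry coefficient $\alpha(h)$, and then absorb the bad coefficient using the stepsize restriction \eqref{p1}; the symmetric Bregman term on $R$ will come for free because \eqref{desineq0} already contains both $D_R(x^k,x^{k+1})$ and $D_R(x^{k+1},x^k)$.

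Concretely, I would first observe that, since Assumption \ref{ass} does not require $E$ to be convex, Lemma \ref{dein} applies and \eqref{desineq0} can be rewritten, using $D_R^{symm}(x^{k+1},x^k)=D_R(x^{k+1},x^k)+D_R(x^k,x^{k+1})$, as
\begin{equation*}
f_\beta(x^{k+1})-f_\beta(x^k)\leq \Bigl(L-\tfrac{1}{\delta^k}\Bigr)D_h(x^{k+1},x^k)-\tfrac{1}{\delta^k}D_h(x^k,x^{k+1})-\mu^k D_R^{symm}(x^{k+1},x^k).
\end{equation*}
Next, by the very definition of the symmetry coefficient in Definition \ref{def1}, $D_h(x^k,x^{k+1})\geq \alpha(h)\,D_h(x^{k+1},x^k)$, so I may replace the middle term by $-\tfrac{\alpha(h)}{\delta^k}D_h(x^{k+1},x^k)$ to collapse the two $D_h$-terms into a single coefficient $L-\tfrac{1+\alpha(h)}{\delta^k}$ multiplying $D_h(x^{k+1},x^k)$.

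Then I would plug in the stepsize bound \eqref{p1}, $\delta^k<\tfrac{1+\alpha(h)-\omega}{L}$: this yields $\tfrac{1+\alpha(h)}{\delta^k}>\tfrac{L(1+\alpha(h))}{1+\alpha(h)-\omega}$, and a one-line calculation gives
\begin{equation*}
L-\tfrac{1+\alpha(h)}{\delta^k}< L-\tfrac{L(1+\alpha(h))}{1+\alpha(h)-\omega}=-\tfrac{L\omega}{1+\alpha(h)-\omega}=-\rho.
\end{equation*}
Combining this with $\mu^k\geq \mu>0$ produces exactly \eqref{sdp}. For \eqref{lim}, I would telescope \eqref{sdp} from $k=0$ to $N$ and use Assumption \ref{ass}(iv) (so that $f_\beta(x^{N+1})\geq \upsilon >-\infty$) to obtain $\sum_{k=0}^\infty \bigl[\rho D_h(x^{k+1},x^k)+\mu D_R^{symm}(x^{k+1},x^k)\bigr]\leq f_\beta(x^0)-\upsilon<\infty$, whence both terms must vanish in the limit.

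I do not expect real obstacles here: every ingredient is already available (the descent inequality \eqref{desineq0}, the definition of $\alpha(h)$, the stepsize restriction \eqref{p1}, and the lower-boundedness in Assumption \ref{ass}(iv)), and the symmetric Bregman term is handled by simply \emph{not} dropping the forward distance $D_R(x^{k+1},x^k)$ after applying the three-points identity inside the proof of Lemma \ref{dein}. The only minor bookkeeping point is to note that $\mu^k\geq \mu$ (rather than a monotonicity assumption like \eqref{p2}) suffices here, since we never need the quantity $\delta^k\mu^k$ to telescope as it did in the convex analysis; replacing $\mu^k$ by its lower bound $\mu$ in the last step is the only place where that hypothesis enters.
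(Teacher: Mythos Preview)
Your proposal is correct and follows essentially the same route as the paper: both start from \eqref{desineq0}, combine the two $D_h$-terms via the symmetry coefficient $\alpha(h)$, invoke the stepsize bound \eqref{p1} to extract the constant $\rho$, replace $\mu^k$ by $\mu$, and then telescope using Assumption~\ref{ass}(iv) to get \eqref{lim}. The only cosmetic difference is that the paper first multiplies \eqref{desineq0} through by $\delta^k$ before applying \eqref{p1} and the definition of $\alpha(h)$, whereas you work directly with the coefficient $L-\tfrac{1+\alpha(h)}{\delta^k}$; the two computations are algebraically equivalent.
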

\begin{proof}
Using the inequality \eqref{desineq0} in Lemma \ref{dein}, the definition of $D_R^{symm}$, and the condition \eqref{p1} on $\delta^k$, we derive that
\begin{eqnarray} \label{meq01}
\begin{array}{lll}
\delta^k(f_\beta(x^{k+1})-f_\beta(x^k))&\leq & -\mu^k\delta^kD_R^{symm}(x^{k+1},x^k) -D_h(x^k,x^{k+1})-(1-\delta^k L)D_h(x^{k+1},x^k)\\
&\leq & -\mu^k\delta^kD_R^{symm}(x^{k+1},x^k) -D_h(x^k,x^{k+1})+(\alpha -\omega)D_h(x^{k+1},x^k) \\
&\leq & -\mu^k\delta^kD_R^{symm}(x^{k+1},x^k)-\omega D_h(x^{k+1},x^k),
\end{array}
\end{eqnarray}
where the last relationship follows from the definition of $\alpha(h)$. Thus, \eqref{sdp} follows from \eqref{meq01} by rearranging the terms and using the conditions on $\delta^k$ and $\mu^k$ and also the notation $\rho$.

To obtain \eqref{lim}, one can sum \eqref{sdp} over $k=0, \cdots, n$ to get that
$$\sum_{k=0}^n ( \rho D_h(x^{k+1},x^k)+\mu D_R^{symm}(x^{k+1},x^k))\leq f_\beta(x^0)-f_\beta(x^n)\leq  f_\beta(x^0)- \inf_{x\in\dom h}f_\beta(x).$$
Therefore, $\sum_{k=0}^\infty D_h(x^{k+1},x^k)<\infty$ and $\sum_{k=0}^\infty D_R^{symm}(x^{k+1},x^k)<\infty$. Hence, \eqref{lim} follows immediately. This completes the proof.
\end{proof}

The set of all limit points of $\{x^k\}$ is denoted by $\Omega$. In other words,
\begin{eqnarray*}
\begin{array}{ll}
\Omega:=&\{x^*\in\EE: \textrm{there exists an increasing sequence}\\
 &\textrm{ of integers} ~\{{k_i}\} ~\textrm{such that} ~ \lim_{i\rightarrow \infty}x^{k_i}=x^*\}.
\end{array}
\end{eqnarray*}

\begin{lemma}[Point and function value convergence]\label{mainlemma02}
In addition to Assumption \ref{ass} and (ii) of Assumption \ref{ass1}, we assume that $h$ is strongly convex on $\dom h$ with $\overline{\dom h}=\dom h$ and that the level set  $\{x: f_\beta(x)\leq f_\beta(x^0)\}$ is bounded. Then, we have
$\Omega\neq \emptyset  $ and for any limit point $x^*\in \Omega$, $$\lim_{k\rightarrow \infty}f_\beta(x^k)=f_\beta(x^*).$$
\end{lemma}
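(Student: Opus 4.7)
The plan is to combine the monotone descent from Lemma \ref{mainlemma01} with the subgradient inequality for $R$, using strong convexity of $h$ to convert the Bregman proximity $D_h(x^{k+1},x^k)\to 0$ into Euclidean proximity $\|x^{k+1}-x^k\|\to 0$.  First, the inequality $f_\beta(x^{k+1})\leq f_\beta(x^k)$ from Lemma \ref{mainlemma01} together with the lower bound in Assumption \ref{ass}(iv) forces $\{f_\beta(x^k)\}$ to converge monotonically to some $\bar f$, and every $x^k$ lies in the bounded level set $\{x:f_\beta(x)\leq f_\beta(x^0)\}$, so Bolzano--Weierstrass gives $\Omega\neq\emptyset$.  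Fix $x^*\in\Omega$ with $x^{k_i}\to x^*$; the hypothesis $\dom h=\overline{\dom h}$ places $x^*$ in $\dom h$, and strong convexity of $h$ combined with \eqref{lim} yields $\|x^{k+1}-x^k\|\to 0$, so $x^{k_i-1}$ and $x^{k_i+1}$ also converge to $x^*$.

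The continuity of $E$ on $\dom h$ (Assumption \ref{ass}(iii)) gives $E(x^{k_i})\to E(x^*)$, and lower semicontinuity of $R$ gives $\liminf_i R(x^{k_i})\geq R(x^*)$, so $f_\beta(x^*)\leq\liminf_i f_\beta(x^{k_i})=\bar f$.  When $\beta=0$ this already completes the proof.  For $\beta>0$, the reverse inequality $\bar f\leq f_\beta(x^*)$ reduces to $\limsup_i R(x^{k_i})\leq R(x^*)$.  Picking any $p^{k_i}\in\partial R(x^{k_i})$, which is possible since Assumption \ref{ass}(ii) ensures $x^{k_i}\in\rint\dom h\subset\dom\partial R$, the subgradient inequality $R(x^*)\geq R(x^{k_i})+\langle p^{k_i},x^*-x^{k_i}\rangle$ rearranges to
\begin{equation*}
R(x^{k_i})\leq R(x^*)+\langle p^{k_i},x^{k_i}-x^*\rangle,
\end{equation*}
so that, with $x^{k_i}-x^*\to 0$, the task reduces to showing $\{p^{k_i}\}$ is bounded.

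This subgradient boundedness is the main obstacle.  The natural route is the local boundedness of $\partial R$ on compact subsets of the interior of $\dom R$---a standard property of proper lsc convex functions (cf.\ Theorem 24.7 in \cite{Rock1970convex}), and exactly the content of Assumption \ref{ass2} already used elsewhere in the paper for nonconvex convergence.  With $\{p^{k_i}\}$ bounded and $x^{k_i}\to x^*$, the right-hand side of the displayed inequality converges to $R(x^*)$, giving $\limsup_i R(x^{k_i})\leq R(x^*)$ and therefore $\bar f=\lim_i f_\beta(x^{k_i})\leq f_\beta(x^*)$.  Combined with the easy direction this yields $\bar f=f_\beta(x^*)$, as desired.
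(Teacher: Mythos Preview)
Your argument for $\Omega\neq\emptyset$, for the convergence of $\{f_\beta(x^k)\}$, and for the easy direction $f_\beta(x^*)\leq\bar f$ is fine and matches the paper. The gap is in the reverse direction. You reduce $\limsup_i R(x^{k_i})\leq R(x^*)$ to the boundedness of $\{p^{k_i}\}$, and then invoke either Rockafellar's Theorem~24.7 or Assumption~\ref{ass2}. Neither is available here: Assumption~\ref{ass2} is \emph{not} among the hypotheses of this lemma, and Rockafellar's local boundedness of $\partial R$ holds only on $\rint\dom R$, whereas nothing in the stated assumptions guarantees that $x^*$ (or a neighborhood of it) lies in $\rint\dom R$. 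You have essentially flagged this yourself (``this subgradient boundedness is the main obstacle''), but you have not closed it under the lemma's actual hypotheses. Notice also that your proof never uses condition~(ii) of Assumption~\ref{ass1}, which is explicitly listed as a hypothesis---that is a strong hint that it is meant to do the work here.

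The paper circumvents subgradient boundedness entirely by exploiting the \emph{optimality} of $x^{k_i+1}$ in the subproblem \eqref{a1}. Comparing the objective at $x^{k_i+1}$ with its value at the feasible point $x^*$ gives
\[
\langle\nabla E(x^{k_i}),x^{k_i+1}-x^{k_i}\rangle+\beta R(x^{k_i+1})+\tfrac{1}{\delta^{k_i}}D_h(x^{k_i+1},x^{k_i})+\mu^{k_i}D_R(x^{k_i+1},x^{k_i})
\;\leq\;
\langle\nabla E(x^{k_i}),x^*-x^{k_i}\rangle+\beta R(x^*)+\tfrac{1}{\delta^{k_i}}D_h(x^*,x^{k_i})+\mu^{k_i}D_R(x^*,x^{k_i}).
\]
On the left, $D_h(x^{k_i+1},x^{k_i})\to 0$ and $D_R(x^{k_i+1},x^{k_i})\to 0$ by \eqref{lim} and strong convexity of $h$; the inner-product terms vanish since $\{\nabla E(x^{k_i})\}$ is bounded and $x^{k_i},x^{k_i+1}\to x^*$. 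On the right, this is exactly where condition~(ii) of Assumption~\ref{ass1} enters: it gives $D_h(x^*,x^{k_i})\to 0$ and $D_R(x^*,x^{k_i})\to 0$. Passing to the limit yields $\limsup_i R(x^{k_i+1})\leq R(x^*)$ with no appeal to subgradient bounds, and the rest follows as in your write-up.
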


\begin{proof}

The boundedness of $\{x: f_\beta(x)\leq f_\beta(x^0)\}$ and the nonincreasing property of $\{f_\beta(x^k)\}$ from \eqref{sdp} ensure the boundedness of $\{x^k\}$. Hence, $\emptyset \neq \Omega $. Take $x^*\in \Omega$. This means there exists a subsequence $\{x^{k_i}\}\subset \{x^k\}\subset \rint\dom h$ such that $\lim_{i\rightarrow \infty}x^{k_i}=x^*\in \overline{\dom h}= \dom h$. Together with \eqref{lim} in Lemma \ref{mainlemma01} and using the strong convexity of $h$, we can conclude that as $i\rightarrow \infty$,
 \begin{equation}\label{lim01}
~ D_h(x^{k_i+1},x^{k_i})\rightarrow 0, ~\|x^{k_i+1}-x^{k_i}\|\rightarrow 0, ~D_R(x^{k_i+1},x^{k_i})\rightarrow 0.
 \end{equation}
Note that $\{x^k\}\subset \rint\dom h\subset \rint\dom E$ due to (iii) of Assumption \ref{ass}. Using boundedness $\{x^k\}$ and Theorem 3.16 in \cite{Beck2017First}, we know that $\{\nabla E(x^k)\}$ is bounded.
Hence, as $i\rightarrow \infty$,
\begin{eqnarray}\label{lim02}
\begin{array} {lll}
 &&\langle \nabla E(x^{k_i}), x^*-x^{k_i}\rangle \leq \|\nabla  E(x^{k_i})\| \|x^*-x^{k_i}\| \rightarrow 0,\\
 &&\langle \nabla E(x^{k_i}), x^{k_i+1}-x^{k_i}\rangle \leq \|\nabla  E(x^{k_i})\| \|x^{k_i+1}-x^{k_i}\| \rightarrow 0.
\end{array}
\end{eqnarray}
In light of \eqref{a1}, we have
\begin{eqnarray} \label{meq02}
\begin{array}{lll}
& & \langle \nabla E(x^{k_i}), x^{k_i+1}-x^{k_i}\rangle+ \beta R(x^{k_i+1})+\frac{1}{\delta^{k_i}}D_h(x^{k_i+1},x^{k_i}) + \mu^{k_i} D_R (x^{k_i+1},x^{k_i}) \\
&\leq & \langle \nabla E(x^{k_i}), x^*-x^{k_i}\rangle+ \beta R(x^*)+\frac{1}{\delta^{k_i}}D_h(x^*,x^{k_i}) + \mu^{k_i} D_R (x^*,x^{k_i}).
\end{array}
\end{eqnarray}
Letting $i\rightarrow\infty$ in the above inequality, using the results \eqref{lim01}-\eqref{lim02} and the assumption that $D_h(y, y^k)\rightarrow 0$ and $D_R(y, y^k)\rightarrow 0$ if $\{y^k\}\subset \rint\dom h$ converges to some $y\in \dom h$, we obtain
 \begin{equation}\label{meq03}
 {\lim\sup}_{i\rightarrow \infty}R(x^{k_i+1})\leq R(x^*).
  \end{equation}
Combining the continuity of $E$ over $\dom h=\overline{\dom h}$ and noting that $x^{k_i+1}\rightarrow x^*$ as $i\rightarrow \infty$ as well since $\|x^{k_i+1}-x^{k_i}\|\rightarrow 0$, we further have
 \begin{equation}\label{meq04}
 {\lim\sup}_{i\rightarrow \infty} (E(x^{k_i+1})+\beta R(x^{k_i+1}))\leq E(x^*)+\beta R(x^*).
  \end{equation}
On the other hand, using the lower semicontinuity of $R$ and $E$, we derive that
\begin{eqnarray} \label{meq05}
\begin{array}{lll}
{\lim\inf}_{i\rightarrow \infty} (E(x^{k_i+1})+\beta R(x^{k_i+1}))&\geq & {\lim\inf}_{i\rightarrow \infty} E(x^{k_i+1}) +{\lim\inf}_{i\rightarrow \infty} \beta R(x^{k_i+1}) \\
&\geq & E(x^*)+\beta R(x^*).
\end{array}
\end{eqnarray}
Therefore, we get
 \begin{equation}\label{meq06}
  {\lim}_{i\rightarrow \infty}f_\beta(x^{k_i+1})= {\lim}_{i\rightarrow \infty} (E(x^{k_i+1})+\beta R(x^{k_i+1}))= E(x^*)+\beta R(x^*)=f_\beta(x^*).
  \end{equation}
Note that $\{f_\beta(x^k)\}$ is a nonincreasing sequence from Lemma \ref{mainlemma01} and is lower bounded by the finite value $\inf_{x\in\dom h}f_\beta(x)$ and hence it is convergent. Therefore, we have  $\lim_{k\rightarrow \infty}f_\beta(x^k)=f_\beta(x^*).$
\end{proof}

In order to derive the global convergence of $\{x^k\}$ without the gradient Lipschitz continuity and convexity of $E$, we combine the method in \cite{benning2021choose} and that in \cite{bolte2017first}, both of which were originally inspired by \cite{2014Proximal}. First, we introduce a modified surrogate function $F:\EE \times \EE\rightarrow \RR\bigcup\{+\infty\}$ in the same spirit of that in \cite{benning2021choose}:
  \begin{equation}\label{sof}
F_{\beta,\mu}(x,y):=E(x)+(\beta+\mu)R(x)+\mu R^*(y)-\mu\langle x, y\rangle.
  \end{equation}
Note that the case of $\beta=0$ and $\mu=1$ reduces to the surrogate function in \cite{benning2021choose}, which is only defined for functions with a Lipschitz continuous gradient. In what follows, we will deduce a sufficient decrease property of the surrogate function $F_{\beta,\mu}$ and its subgradient bound. To this end, we first link this function with the known function $f_\beta$ and some Bregman distance. For any fixed $y\in \dom \partial R$, we take $z\in \partial R^*(y)$. Then by the Fenchel theorem, we have $y\in \partial R(z)$ and $R^*(y)+R(z)=\langle y, z\rangle$, with which the surrogate function $F_{\beta,\mu}$ now can be reformulated as
  \begin{equation}\label{sof1}
F_{\beta,\mu}(x,y) =f_\beta(x)+\mu D_R(x, z).
  \end{equation}
  In particular, we have $F_{\beta,\mu}(x^{k+1},p^k) =f_\beta(x^{k+1})+\mu D_R(x^{k+1}, x^k)$ since $p^k\in \partial R(x^k)$ and hence $x^k\in \partial R^*(p^k)$. Note that
   \begin{equation}\label{sofsg}
\partial F_{\beta,\mu}(x,y) =\left\{\left(\begin{array}{c}
                                      \nabla E(x)+(\beta+\mu)s_1-\mu y \\
                                      \mu s_2-\mu x
                                    \end{array}\right): s_1\in\partial R(x), s_2\in \partial R^*(y)\right\}.
  \end{equation}
Using again the fact that $p^{k+1}\in \partial R(x^{k+1})$ and $x^k\in \partial R^*(p^k)$, we can get
   \begin{equation}\label{sg}
r^{k+1} :=\left(\begin{array}{c}
                                      \nabla E(x^{k+1})+(\beta+\mu)p^{k+1}-\mu p^k \\
                                      \mu x^{k}-\mu x^{k+1}
                                    \end{array}\right)\in \partial F_{\beta,\mu}(x^{k+1},p^k).
  \end{equation}
Denote $(s^{k}):=(x^{k}, p^{k-1})$. The set of all limit points of $\{s^k\}$ is denoted by $\Omega_0$. In other words,
\begin{eqnarray*}
\begin{array}{ll}
\Omega_0:=&\{s^*:=(x^*,p^*)\in\EE\times \EE: \textrm{there exists an increasing sequence}\\
 &\textrm{ of integers} ~\{{k_i}\} ~\textrm{such that} ~ \lim_{i\rightarrow \infty}x^{k_i}=x^*, \lim_{i\rightarrow \infty}p^{k_i-1}=p^*\}.
\end{array}
\end{eqnarray*}
We point out that the set $\Omega_0$ above is slightly different from the set of limit points in \cite{benning2021choose}, where the following definition was used.
\begin{eqnarray*}
\begin{array}{ll}
\omega(s^0):=&\{s^*:=(x^*,p^*)\in\EE\times \EE: \textrm{there exists an increasing sequence}\\
 &\textrm{ of integers}~ \{{k_i}\} ~ \textrm{such that}~  \lim_{i\rightarrow \infty}x^{k_i}=x^*, \lim_{i\rightarrow \infty}p^{k_i}=p^*\}.
\end{array}
\end{eqnarray*}

\begin{lemma} \label{mainlemma05}
Under the same setting as Lemma \ref{mainlemma01}, we have the following sufficient decrease property
\begin{equation}\label{sdp00}
F_{\beta,\mu}(x^{k+1}, p^k)+\rho D_h(x^{k+1},x^k) + \mu D_R(x^k,x^{k+1})+\mu D_R(x^k,x^{k-1})\leq F_{\beta,\mu}(x^{k}, p^{k-1}).
\end{equation}
Suppose further that Assumption \ref{ass3} holds for $h$ and $E$ and that the level set  $\{x: f_\beta(x)\leq f_\beta(x^0)\}$ is bounded. Then, we have the subgradient bound by the iterates gap
\begin{equation}\label{sb}
\|r^{k+1}\|\leq \rho_2 \|x^{k+1}-x^k\|+(\mu^k-\mu)\|p^{k+1}-p^k\|,
\end{equation}
where $\rho_2:=L_E+\frac{L_f}{\delta}+\mu$ and $\delta:=\sup_k\{\delta^k\}$.
Moreover, if (ii)of Assumption \ref{ass1} and Assumption \ref{ass2} also hold, $\lim_k\mu^k=\mu$, and we also assume that $h$ is strongly convex on $\dom h$ with $\overline{\dom h}=\dom h$, then $\Omega_0$ must be a nonempty and compact set, and for every $s^*=(x^*,p^*)\in \Omega_0$ we have $\lim_{k\rightarrow \infty}\dist(s^k, \Omega_0)=0$ and
\begin{equation}\label{lim2}
\lim_{k\rightarrow \infty}F_{\beta,\mu}(s^k) =f_\beta(x^*).
\end{equation}
\end{lemma}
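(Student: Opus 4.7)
The plan is to treat the three assertions in turn, using the identity $F_{\beta,\mu}(x^{k+1},p^k)=f_\beta(x^{k+1})+\mu D_R(x^{k+1},x^k)$ from \eqref{sof1} (which follows from the Fenchel-style equivalence $p^k\in\partial R(x^k)\iff x^k\in\partial R^*(p^k)$ with equality $R(x^k)+R^*(p^k)=\langle p^k,x^k\rangle$) as the basic bridge between the surrogate $F_{\beta,\mu}$ and the original objective $f_\beta$.

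For the sufficient decrease \eqref{sdp00}, I would apply this identity to both $F_{\beta,\mu}(x^{k+1},p^k)$ and $F_{\beta,\mu}(x^k,p^{k-1})$. The summand $\mu D_R(x^k,x^{k-1})$ then appears on both sides of the target inequality and cancels, reducing \eqref{sdp00} to $f_\beta(x^{k+1})+\rho D_h(x^{k+1},x^k)+\mu D_R^{symm}(x^{k+1},x^k)\leq f_\beta(x^k)$, which is precisely \eqref{sdp} of Lemma \ref{mainlemma01} (noting $\mu\leq \mu^k$ to absorb the $\mu^k$-coefficient into $\mu$).

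For the subgradient bound \eqref{sb}, I would rearrange the optimality condition \eqref{Basic} to solve for $\beta p^{k+1}$ and substitute into the first coordinate of $r^{k+1}$ in \eqref{sg}. The cross-cancellation of $\beta p^{k+1}$ gives the decomposition
\begin{equation*}
\nabla E(x^{k+1})+(\beta+\mu)p^{k+1}-\mu p^k=[\nabla E(x^{k+1})-\nabla E(x^k)]-\tfrac{1}{\delta^k}[\nabla h(x^{k+1})-\nabla h(x^k)]+(\mu-\mu^k)(p^{k+1}-p^k).
\end{equation*}
The hypothesis $\{x:f_\beta(x)\leq f_\beta(x^0)\}$ bounded, combined with the monotonicity produced in \eqref{sdp00}, makes $\{x^k\}$ bounded, so Assumption \ref{ass3} applied to both $\nabla E$ and $\nabla h$ controls the two bracketed differences by a constant multiple of $\|x^{k+1}-x^k\|$. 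Combining this with the trivial estimate $\|\mu x^k-\mu x^{k+1}\|=\mu\|x^{k+1}-x^k\|$ for the second coordinate of $r^{k+1}$, and using $\mu^k\geq\mu$ to write $|\mu-\mu^k|=\mu^k-\mu$, the triangle inequality yields \eqref{sb} with the stated $\rho_2$.

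For the limit-set claims, boundedness of $\{x^k\}$ together with Assumption \ref{ass2} gives boundedness of $\{p^k\}$, hence of $\{s^k\}=\{(x^k,p^{k-1})\}$, so $\Omega_0$ is nonempty; it is compact because the set of subsequential limits of a bounded sequence is always closed, and $\dist(s^k,\Omega_0)\to 0$ is the standard consequence of boundedness. To identify $\lim_k F_{\beta,\mu}(s^k)$, I would invoke the identity $F_{\beta,\mu}(s^k)=f_\beta(x^k)+\mu D_R(x^k,x^{k-1})$, use $D_R^{symm}(x^k,x^{k-1})\to 0$ from \eqref{lim} to force $D_R(x^k,x^{k-1})\to 0$, and appeal to Lemma \ref{mainlemma02} (whose hypotheses are exactly those assumed here, with the strong convexity of $h$ playing the essential role) to conclude $f_\beta(x^k)\to f_\beta(x^*)$ for any $x^*$ arising as the first component of a point in $\Omega_0$; independence of this value from the chosen limit point is automatic since $\{f_\beta(x^k)\}$ itself converges. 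The step I expect to be most delicate is reconciling the two notions of limit set --- $\Omega$ for $\{x^k\}$ (as used in Lemma \ref{mainlemma02}) versus $\Omega_0$ for the shifted pair $\{s^k\}=\{(x^k,p^{k-1})\}$ --- so that the function-value convergence along $\Omega$ can be lawfully transferred to every first component of a point of $\Omega_0$.
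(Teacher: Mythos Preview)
Your proposal is correct and follows essentially the same route as the paper: the sufficient decrease via the identity \eqref{sof1} plus \eqref{sdp}, the subgradient bound by substituting the optimality relation \eqref{Basic} into the first block of $r^{k+1}$ and invoking the local Lipschitz estimates on the bounded set $\{x^k\}$, and the limit-set claims via boundedness of $\{(x^k,p^{k-1})\}$ combined with Lemmas \ref{mainlemma01}--\ref{mainlemma02}. Your flagged concern dissolves immediately, exactly as the paper notes in one line: if $(x^{k_i},p^{k_i-1})\to(x^*,p^*)\in\Omega_0$ then in particular $x^{k_i}\to x^*$, so $x^*\in\Omega$ and Lemma \ref{mainlemma02} applies verbatim.
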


\begin{proof}
Adding $\mu D_R(x^k, x^{k-1})$ to both sides of \eqref{sdp} and using the formulation \eqref{sof1}, we deduce \eqref{sdp00}. The boundedness of the level set  $\{x: f_\beta(x)\leq f_\beta(x^0)\}$  and the monotonicity of $\{f_\beta(x^k)\}$ ensure the boundedness of $\{x^k\}$ and hence Assumption \ref{ass3} can be employed to bound the difference of gradient below. Actually, we can derive that
\begin{eqnarray} \label{bd}
\begin{array}{lll}
\|r^{k+1}\|&\leq &  \|\nabla E(x^{k+1})+(\beta+\mu)p^{k+1}-\mu p^k\|+ \mu\|x^{k}- x^{k+1}\|\\
&\leq & \|\nabla E(x^{k+1})+(\beta+\mu^k)p^{k+1}-\mu^k p^k\|+(\mu^k-\mu)\|p^{k+1}-p^k\|+ \mu\|x^{k}- x^{k+1}\|\\
&=& \|\nabla E(x^{k+1})-\nabla E(x^k)-  \frac{1}{\delta^k}(\nabla h(x^{k+1})-\nabla h(x^k) ) \|\\
& &+(\mu^k-\mu)\|p^{k+1}-p^k\|+ \mu\|x^{k}- x^{k+1}\|\\
&\leq & \|\nabla E(x^{k+1})-\nabla E(x^k)\|+ \frac{1}{\delta^k}\|\nabla h(x^{k+1})-\nabla h(x^k)  \|\\
& &+(\mu^k-\mu)\|p^{k+1}-p^k\|+ \mu\|x^{k}- x^{k+1}\|\\
&\leq & (L_E+\frac{L_f}{\delta}+\mu)\| x^{k+1}-x^{k}\|+(\mu^k-\mu)\|p^{k+1}-p^k\|,
\end{array}
\end{eqnarray}
where the  equality follows from \eqref{a2}.

Now, we show the nonemptyness of $\Omega_0$. By the boundedness of $\{x^k\}$, there exists an increasing of integers $\{i_j\}_{j\in\mathbb{N}}$ such that $\lim_{j\rightarrow \infty} x^{i_j}=x^*$. Recall that $p^{i_j}\in\partial R(x^{i_j})$. Using the locally bounded subgradient Assumption \ref{ass2}, we know that  $\{p^{i_j}\}$ must be bounded (actually $\{p^k\}$ is bounded due to the same argument) and hence there exists a subsequence $\{k_i\}\subset \{i_j\}$ such that $\lim_{i\rightarrow \infty}p^{k_i}=\bar{p}$. From \eqref{a2}, it holds that
\begin{equation}
(\mu^{k_i-1}+\beta)p^{k_i}=\mu^{k_i-1} p^{k_i-1} - \nabla E(x^{k_i-1})- \frac{1}{\delta^{k_i-1}}(\nabla h(x^{k_i})-\nabla h(x^{k_i-1})).
\end{equation}
Note that $\lim_i x^{k_i}=\lim_i x^{k_i-1}=x^*$, $\lim_i \mu^{k_i-1}=\mu$ and $\{\delta^{k_i-1}\}$ is bounded. Together with Assumption \ref{ass3}, we conclude that there exists a point $p^*$ such that $\lim_{i\rightarrow \infty}p^{k_i-1}=p^*$ (such point may be different from $\bar{p}$). Therefore, $s^*=(x^*, p^*)$ indeed belongs to $\Omega_0$ and hence it is nonempty. In particular, $x^*\in \Omega$ for each $s^*=(x^*, p^*)\in\Omega_0$. Thus, in light of Lemma \ref{mainlemma01} and Lemma \ref{mainlemma02}, we derive that
\begin{equation}
 \lim_{k\rightarrow \infty}F_{\beta,\mu}(s^k) =\lim_{k\rightarrow \infty}F_{\beta,\mu}(x^{k},p^{k-1}) =\lim_{k\rightarrow \infty}(f_{\beta}(x^{k})+\mu D_R(x^{k},x^{k-1}))=f_\beta(x^*).
\end{equation}
From Theorem 3.7 in \cite{Rudin}, we know that the set $\Omega_0$ must be closed since it is the set of cluster points of $\{s^k\}$. The boundedness of $\Omega_0$ is due to the boundedness of $\{x^k, p^k\}$. Therefore, the set $\Omega_0$ is compact and hence $\lim_{k\rightarrow \infty}\dist(s^k, \Omega_0)=0$ by the definition of limit points. This completes the proof.
\end{proof}

\begin{lemma}\label{lemma11}
Let $\{a^k\}$ and $\{b^k\}$ be given sequences of $\EE$. If $a^k\rightarrow 0$ and $b^k\rightarrow b\neq 0$ as $k\rightarrow \infty$, then as $n\rightarrow \infty$ we have
\begin{equation}
\|\sum_{k=0}^n(a^k+b^k)\|\rightarrow \infty.
\end{equation}
\end{lemma}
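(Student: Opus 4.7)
The plan is to reduce the statement to a Cesàro-type observation about the partial sums of a sequence converging to zero. I would first rewrite the summand by splitting off the nonzero limit: set $c^k := (a^k + b^k) - b$, so that $c^k \to 0$ as $k \to \infty$ (since $a^k \to 0$ and $b^k \to b$), and
\begin{equation*}
\sum_{k=0}^{n}(a^k + b^k) = (n+1)\,b + \sum_{k=0}^{n} c^k.
\end{equation*}
Applying the reverse triangle inequality then gives
\begin{equation*}
\Bigl\|\sum_{k=0}^{n}(a^k + b^k)\Bigr\| \;\geq\; (n+1)\,\|b\| - \Bigl\|\sum_{k=0}^{n} c^k\Bigr\|.
\end{equation*}

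Next I would control the remainder term $\sum_{k=0}^n c^k$ in a sublinear fashion. By the triangle inequality $\|\sum_{k=0}^n c^k\| \leq \sum_{k=0}^n \|c^k\|$, and since $\|c^k\| \to 0$, the standard Cesàro mean argument yields $\frac{1}{n+1}\sum_{k=0}^n \|c^k\| \to 0$. Dividing the displayed inequality above by $n+1$ and taking the liminf as $n \to \infty$ gives
\begin{equation*}
\liminf_{n \to \infty} \frac{1}{n+1}\Bigl\|\sum_{k=0}^{n}(a^k + b^k)\Bigr\| \;\geq\; \|b\| \;>\; 0,
\end{equation*}
which immediately forces $\|\sum_{k=0}^{n}(a^k + b^k)\| \to \infty$.

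There is essentially no hard step: the only thing requiring a line of care is the Cesàro mean fact $\frac{1}{n+1}\sum_{k=0}^n \|c^k\| \to 0$, which follows from splitting the sum at an index $N$ chosen so that $\|c^k\| < \varepsilon$ for $k \geq N$, and then noting that the fixed-size prefix contributes $O(1/n)$ while the tail contributes at most $\varepsilon$. This is a textbook estimate, so the whole argument is short; the only reason the statement deserves to be isolated as a lemma is that it will presumably be invoked in the subsequent global-convergence proof (most likely via $a^k = \nabla E(x^{k+1})-\nabla E(x^k)$ and $b^k \to $ some nonzero vector).
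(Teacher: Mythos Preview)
Your proposal is correct and follows essentially the same approach as the paper: both arguments split off the dominant term $(n+1)b$, apply the reverse triangle inequality, and then show the remainder $\sum_{k=0}^n c^k$ (the paper keeps $a^k$ and $b^k-b$ separate, you combine them) is $o(n)$ via an index-splitting estimate. Your Cesàro-mean phrasing is just a clean repackaging of the paper's explicit choice of $k_0$ with $\|a^k\|,\|b^k-b\|<\tfrac{1}{4}\|b\|$ for $k\geq k_0$.
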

\begin{proof}
Using the condition that $a^k\rightarrow 0$ and $b^k\rightarrow b\neq 0$ as $k\rightarrow \infty$, we can find an index $k_0$ such that for any $k\geq k_0$, it holds that
\begin{equation}
\|a^k\|<\frac{1}{4}\|b\|, ~~\|b^k-b\|<\frac{1}{4}\|b\|.
\end{equation}
Letting $n>k_0$ and using the reverse triangle inequality $\|u+\sum_iu_i\|\geq \|u\|-\sum_i\|u_i\|$, we drive that
\begin{eqnarray}
\begin{array}{lll}
\|\sum_{k=0}^n(a^k+b^k)\|& = &  \|\sum_{k=0}^nb+ \sum_{k=0}^n(a^k+(b^k-b))\|\\
& \geq &  \|\sum_{k=0}^n b\| - \sum_{k=0}^n\|a^k\| -\sum_{k=0}^n\|b^k-b\|\\
& \geq & (n+1)\|b\| - \sum_{k=0}^{k_0}(\|a^k\| +\|b^k-b\|) -\frac{n-k_0}{2}\|b\|\\
&=& \frac{n+2+k_0}{2}\|b\|- \sum_{k=0}^{k_0}(\|a^k\| +\|b^k-b\|),
\end{array}
\end{eqnarray}
from which the conclusion follows. This completes the proof.
\end{proof}

Now, we are ready to present the second main convergence theorem in this study.
\begin{theorem}\label{mainthm2}
Suppose that $F_{\beta, \mu}$ is a KL function, Assumptions \ref{ass}, \ref{ass2} and (ii) of Assumption \ref{ass1} hold,  Assumption \ref{ass3} holds for $h$ and $E$,  the level set  $\{x: f_\beta(x)\leq f_\beta(x^0)\}$ is bounded, and $h$ is strongly convex with $\overline{\dom h}=\dom h$. Let the parameters $\delta^k$ and $\mu^k$ satisfy \eqref{p1}-\eqref{p2} and $\sum_{k=0}^\infty(\mu^k-\mu)<\infty$. Let $\{(x^k,p^k)\}$ be the sequence generated by \eqref{a1}-\eqref{a2}. Then, the sequence $\{x^k\}$ has finite length in the sense that
\begin{equation}\label{cau}
 \sum_{k=0}^\infty \|x^{k+1}-x^k\|<\infty.
\end{equation}
Moreover, the sequence $\{x^k\}$ converges to a critical point $x^*$ of $E$ in the sense that $\nabla E(x^*)=0$ if $\beta=0$. Furthermore, if $R^*$ is assumed to be locally strongly convex, then the dual sequence $\{p^k\}$ also converges and the limit point $x^*$ of $\{x^k\}$ is a critical point of $f_\beta$ in the sense that $0\in \nabla E(x^*) +\beta \partial R(x^*)$.
\end{theorem}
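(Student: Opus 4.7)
The plan is to execute the standard Kurdyka-Łojasiewicz (KL) finite-length machinery, whose three structural ingredients have already been assembled in Lemma \ref{mainlemma05}: (i) the sufficient decrease \eqref{sdp00} for the augmented potential $F_{\beta,\mu}$ along $s^k:=(x^k,p^{k-1})$; (ii) the relative-error subgradient bound \eqref{sb}; and (iii) compactness of $\Omega_0$ with $F_{\beta,\mu}(s^k)\to f_\beta(x^*)$ for every cluster point. The only nonclassical feature is the perturbation term $(\mu^k-\mu)\|p^{k+1}-p^k\|$ in \eqref{sb}; absorbing it is the main obstacle, which is precisely why the theorem assumes $\sum_k(\mu^k-\mu)<\infty$ and why boundedness of $\{p^k\}$ from Assumption \ref{ass2} is essential.

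For \eqref{cau}, I would apply the uniformized KL inequality on a neighborhood of $\Omega_0$, giving $\varphi'(F_{\beta,\mu}(s^k)-f_\beta(x^*))\cdot\dist(0,\partial F_{\beta,\mu}(s^k))\geq 1$ for $k$ large. Strong convexity of $h$ converts $\rho D_h(x^{k+1},x^k)$ in \eqref{sdp00} into a quadratic lower bound $C_0\|x^{k+1}-x^k\|^2$. Writing $a_k:=\|x^k-x^{k-1}\|$, $\epsilon_k:=(\mu^k-\mu)\|p^{k+1}-p^k\|$, and $\Delta_k:=\varphi(F_{\beta,\mu}(s^k)-f_\beta(x^*))-\varphi(F_{\beta,\mu}(s^{k+1})-f_\beta(x^*))$, combining KL, concavity of $\varphi$, \eqref{sdp00}, and \eqref{sb} yields $C_0 a_{k+1}^2\leq \Delta_k(\rho_2 a_k+\epsilon_{k-1})$. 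Splitting via $\sqrt{u+v}\leq\sqrt{u}+\sqrt{v}$ together with AM-GM produces the telescoping bound $2a_{k+1}\leq a_k + C_1\Delta_k + 2\sqrt{C_2\Delta_k\epsilon_{k-1}}$. Summing $k$ and using $\sum_k\Delta_k<\infty$, plus the Cauchy-Schwarz estimate $\sum_k\sqrt{\Delta_k\epsilon_{k-1}}\leq \sqrt{\sum_k\Delta_k}\sqrt{\sum_k\epsilon_k}<\infty$ (the last factor being finite since $\|p^{k+1}-p^k\|$ is uniformly bounded and $\sum_k(\mu^k-\mu)<\infty$), delivers \eqref{cau}.

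Finite length makes $\{x^k\}$ Cauchy, so $x^k\to x^*$. To identify $x^*$ as a critical point of $E$ in the case $\beta=0$, I would rearrange \eqref{a2} and telescope to obtain
\[
p^{n+1}-p^0=\sum_{k=0}^n(a^k+b^k),\qquad a^k:=-\frac{\nabla h(x^{k+1})-\nabla h(x^k)}{\mu^k\delta^k},\ b^k:=-\frac{\nabla E(x^k)}{\mu^k}.
\]
By Assumption \ref{ass3} with $\|x^{k+1}-x^k\|\to 0$, $a^k\to 0$; by continuity of $\nabla E$, $b^k\to -\nabla E(x^*)/\mu$. If $\nabla E(x^*)\neq 0$, Lemma \ref{lemma11} forces $\|p^{n+1}-p^0\|\to\infty$, contradicting boundedness of $\{p^k\}$ from Assumption \ref{ass2}; hence $\nabla E(x^*)=0$.

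Finally, under local strong convexity of $R^*$, the Fenchel identity $p^k\in\partial R(x^k)\Leftrightarrow x^k\in\partial R^*(p^k)$ combined with strong monotonicity of $\partial R^*$ near the cluster points of the bounded sequence $\{p^k\}$ gives $\sigma_{R^*}\|p^{k+1}-p^k\|^2\leq\langle x^{k+1}-x^k,p^{k+1}-p^k\rangle$, whence $\|p^{k+1}-p^k\|\leq\sigma_{R^*}^{-1}\|x^{k+1}-x^k\|$ is summable and $p^k\to p^*$. Passing to the limit in \eqref{a2}---using $\mu^k\to\mu$, $\delta^k\mu^k\geq \tau$ bounded below, and $\nabla h(x^{k+1})-\nabla h(x^k)\to 0$ by local Lipschitzness of $\nabla h$---yields $0=\nabla E(x^*)+\beta p^*$, and $p^*\in\partial R(x^*)$ by closedness of $\gph\partial R$, completing the $f_\beta$-criticality claim.
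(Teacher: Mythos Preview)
Your proposal is correct and follows the paper's proof essentially step for step: the uniformized KL inequality on $\Omega_0$, strong convexity of $h$ to get a quadratic lower bound from \eqref{sdp00}, the subgradient bound \eqref{sb}, and then concavity of $\varphi$ combined with AM-GM to produce a telescoping inequality; the $\beta=0$ criticality argument via telescoping \eqref{a2} and invoking Lemma \ref{lemma11} is exactly the paper's.

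Two small technical deviations are worth noting. First, in absorbing the perturbation $(\mu^{k-1}-\mu)\|p^k-p^{k-1}\|$, the paper applies a single AM-GM to the product $\Delta_k\cdot(\rho_2 a_k+\epsilon_{k-1})$, so the perturbation appears \emph{linearly} in the telescoped sum and is directly summable; you instead split $\sqrt{u+v}\leq\sqrt{u}+\sqrt{v}$ first and then bound $\sum_k\sqrt{\Delta_k\epsilon_{k-1}}$ via Cauchy--Schwarz. Both routes work, the paper's being marginally cleaner. Second, for the $R^*$ locally strongly convex case, the paper upgrades the descent inequality to $F_{\beta,\mu}(s^{k+1})+\nu_2\|s^{k+1}-s^k\|^2\leq F_{\beta,\mu}(s^k)$ (using $D_R(x^k,x^{k-1})=D_{R^*}(p^{k-1},p^k)\geq\frac{\nu_1}{2}\|p^k-p^{k-1}\|^2$) and \emph{reruns} the KL argument on the full sequence $\{s^k\}$. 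Your shortcut---strong monotonicity of $\partial R^*$ giving $\|p^{k+1}-p^k\|\leq\sigma_{R^*}^{-1}\|x^{k+1}-x^k\|$ directly from the already-established summability of $\|x^{k+1}-x^k\|$---is a genuine simplification that avoids repeating the KL machinery.
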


\begin{proof}
We divide the proof into two parts. The first part is to show \eqref{cau} by modifying the methodology in \cite{2014Proximal}.
Let us begin with any point $s^*=(x^*, p^*)\in \Omega_0$. Then, there exists an increasing sequence of integers $\{k_i\}_{i\in\mathbb{N}}$ such that $x^{k_i}\rightarrow x^*$ as $i\rightarrow \infty$. From Lemma \ref{mainlemma05} and recalling that $s^{k}=(x^k, p^{k-1})$, we know
\begin{equation}
\lim_{k\rightarrow \infty}F_{\beta,\mu}(s^k) =f_\beta(x^*).
\end{equation}
Note that the convergent sequence $\{F_{\beta,\mu}(s^k)\}$ is nonincreasing.  If there exists an integer $\bar{k}$ such that $F_{\beta,\mu}(s^{\bar{k}}) =f_\beta(x^*)$, then $F_{\beta,\mu}(s^k)\equiv f_\beta(x^*)$ for $k\geq \bar{k}$ and hence $D_h(x^{k+1},x^k)=0$ for $k\geq \bar{k}$ from \eqref{sdp00},
which implies that $x^k\equiv x^{\bar{k}}$ for $k\geq \bar{k}$ due to the strong convexity of $h$. Hence, the result \eqref{cau} follows trivially.
If there does not exist such an index, then it must hold that $F_{\beta,\mu}(s^k)  >f_\beta(x^*)$ holds for all $k>0$.
Since $\lim_{k\rightarrow \infty}F_{\beta,\mu}(s^k) =f_\beta(x^*)$, for any $\eta>0$ there must exist an integer $\hat{k}>0$ such that $F_{\beta,\mu}(s^k)<f_\beta(x^*)+\eta$ for all $k>\hat{k}$. Similarly, $\lim_{k\rightarrow \infty} \dist (s^k, \Omega_0)=0$ implies for any $\zeta>0$ there must exist an integer $\widetilde{k}>0$ such that $\dist(s^k, \Omega_0)<\zeta$ for all $k>\widetilde{k}$.
Therefore,  for all $k>l:=\max\{\hat{k},\widetilde{k}\}$ we have
 \begin{equation}
 s^k\in \{s:\dist(s,\Omega_0)<\zeta\}\bigcap \{s: f_\beta(x^*)<F_{\beta,\mu}(s)<f_\beta(x^*)+\eta\}.
\end{equation}
Thus, we apply Lemma \ref{UKL} to get,
 \begin{equation}\label{ukl1}
  \varphi^{\prime}(F_{\beta,\mu}(s^k)-f_\beta(x^*))\dist(0,\partial F_{\beta,\mu}(s^k))\geq 1.
\end{equation}
Recall that  $r^k\in \partial F_{\beta,\mu}(s^k)$.  Using \eqref{sb} in Lemma \ref{mainlemma05}, we get that
\begin{equation}\label{ukl2}
 \textrm{dist}(0,\partial F_{\beta,\mu}(s^k))\leq \|r^{k}\|\leq \rho_2 \|x^{k}-x^{k-1}\|+(\mu^{k-1}-\mu)\|p^{k}-p^{k-1}\|.
\end{equation}
On the other hand, from the concavity of $\varphi$ we know that
$$\varphi^\prime(x)\leq \frac{\varphi(x)-\varphi(y)}{x-y}$$
holds for all $x,y\in [0, \eta), x>y$. Hence, by taking $x= F_{\beta,\mu}(s^k)-f_\beta(x^*)$ and
$y=F_{\beta,\mu}(s^{k+1})-f_\beta(x^*)$ in the inequality above, we get
\begin{equation}\label{ukl3}
\varphi^\prime(F_{\beta,\mu}(s^k)-f_\beta(x^*))\leq \frac{\varphi^k-\varphi^{k+1}}{F_{\beta,\mu}(s^k)-F_{\beta,\mu}(s^{k+1})}\leq \frac{\varphi^k-\varphi^{k+1}}{\rho\nu\|x^{k+1}-x^k\|^2},
\end{equation}
where $\varphi^k:=\varphi(F_{\beta,\mu}(s^k)-f_\beta(x^*))$ and the last inequality follows from \eqref{sdp00} and the strong convexity $D_h(x^{k+1},x^k)\geq \nu \|x^{k+1}-x^k\|^2$ for some constant $\nu>0$. Therefore, from \eqref{ukl1}-\eqref{ukl3} we get
$$\|x^{k+1}-x^k\|^2\leq \frac{\rho_2}{\rho\nu}\left(\varphi^k-\varphi^{k+1}\right)\left( \|x^{k}-x^{k-1}\|+\frac{\mu^{k-1}-\mu}{\rho_2}\|p^{k}-p^{k-1}\|\right)$$
Based on the Young's inequality of the form $2\sqrt{ab}\leq a+b$, we further get
$$2\|x^{k+1}-x^k\| \leq \frac{\rho_2}{\rho\nu}(\varphi^k-\varphi^{k+1})+  \|x^{k}-x^{k-1}\|+\frac{\mu^{k-1}-\mu}{\rho_2}\|p^{k}-p^{k-1}\|$$
Summing the inequality above from $k=l,\cdots, N$, we deduce
\begin{align}
2\sum_{k=l}^N \|x^{k+1}-x^k\| & \leq \sum_{k=l}^N \|x^k-x^{k-1}\| + \frac{\rho_2}{\rho\nu}(\varphi^l-\varphi^{N+1}) + \sum_{k=l}^N \frac{\mu^{k-1}-\mu}{\rho_2}\|p^{k}-p^{k-1}\| \nonumber\\
 & \leq \sum_{k=l}^N \|x^{k+1}-x^k\| + \|x^{l}-x^{l-1}\|+ \frac{\rho_2}{\rho\nu}\varphi^l + \sum_{k=l}^N \frac{\mu^{k-1}-\mu}{C\rho_2}.
\end{align}
where the second inequality follows by using the boundedness of $\{p^k\}$, say $\|p^{k}-p^{k-1}\|\leq \frac{1}{C}$ for some constant $C>0$.
Thus, we have
$$\sum_{k=l}^N \|x^{k+1}-x^k\| \leq \|x^{l}-x^{l-1}\|+ \frac{\rho_2}{\rho\nu}\varphi^l + \sum_{k=l}^N \frac{\mu^{k-1}-\mu}{C\rho_2}<\infty,$$
which immediately implies the result \eqref{cau}.

Now, we turn into the second part  to analyze the convergence of $\{x^k\}$ and $\{p^k\}$. Let us first show that $\{x^k\}$ is a Cauchy sequence and hence it converges. In fact, the finite length property implies that $\sum_{k=l}^\infty \|x^{k+1}-x^k\|\rightarrow 0$ as $l\rightarrow \infty$. Thus, for any $m > n\geq l$ we have
$$\|x^m-x^n\|=\|\sum_{k=n}^{m-1}(x^{k+1}-x^k)\|\leq \sum_{k=n}^{m-1}\|x^{k+1}-x^k\|\leq \sum_{k=l}^\infty \|x^{k+1}-x^k\|,$$
 which implies that $\{x^k\}$ is a Cauchy sequence. Using \eqref{a2} with $\beta=0$, we get
 \begin{equation}\label{pp}
p^k- p^{k+1}=\frac{1}{\delta^k\mu^k}(\nabla h(x^{k+1})-\nabla h(x^k)) +\frac{1}{\mu^k} \nabla E(x^k).
\end{equation}
Summing \eqref{pp} over $k=0,\cdots, n$, we get
 \begin{equation}\label{pp1}
p^0- p^{n+1}=\sum_{k=0}^n\left(\frac{1}{\delta^k\mu^k}(\nabla h(x^{k+1})-\nabla h(x^k)) +\frac{1}{\mu^k}\nabla E(x^k)\right).
\end{equation}
Assume that $\nabla E(x^*)\neq 0$. Noting that $\frac{1}{\delta^k\mu^k}(\nabla h(x^{k+1})-\nabla h(x^k))\rightarrow 0$ and $\frac{1}{\mu^k} E(x^k)\rightarrow \frac{1}{\mu}\nabla E(x^*)\neq 0$, we invoke Lemma \ref{lemma11} to conclude that
$\|p^0- p^{n+1}\|\rightarrow \infty$ as $n\rightarrow \infty$, which contradicts the boundedness of $\{p^k\}$. Therefore, we have $\nabla E(x^*)= 0$.

If $R^*$ is locally strongly convex, then for some constant $\nu_1>0$,
$$D_R(x^k,x^{k-1})=D_{R^*}(p^{k-1},p^k)\geq \frac{\nu_1}{2}\|p^k-p^{k-1}\|^2.$$
Recall that $D_h(x^{k+1},x^k)\geq \nu\|x^{k+1}-x^k\|^2$. Thus, using \eqref{sdp00} and letting $\nu_2=\min\{\frac{\mu\nu_1}{2},\rho\nu\}$, we get
 \begin{equation}\label{sdn1}
F_{\beta,\mu}(s^{k+1})+\nu_2\|s^{k+1}-s^k\|^2\leq F_{\beta,\mu}(s^{k}).
\end{equation}
From \eqref{sb}, since $\|x^{k+1}-x^k\|\leq \|s^{k+1}-s^k\|$ we deduce
\begin{equation}\label{sg}
\|r^{k+1}\|\leq \rho_2 \|s^{k+1}-s^k\|+(\mu^k-\mu)\|p^{k+1}-p^k\|,
\end{equation}
Now we repeat the argument of the first part to conclude that $\{s^k\}$ has a finite length and hence it converges. Hence, $\{p^k\}$ also converges to some $p^*$. Note that \eqref{a2} has the following form
$$\nabla E(x^k)+\beta p^k=(\mu^k+\beta)(p^k-p^{k+1})-\frac{1}{\delta^k}(\nabla h(x^{k+1})-\nabla h(x^k)).$$
Letting $k\rightarrow \infty$ above, we immediately get $0=\nabla E(x^*)+\beta p^*$. Finally, using the subgradient inequality and the lsc property of $R$, we drive that for any $x\in \dom R$
\begin{align}
R(x)={\lim\inf}_{k\rightarrow \infty}R(x) & \geq {\lim\inf}_{k\rightarrow \infty}(R(x^k)+\langle p^k,x-x^k\rangle) \nonumber\\
 & \geq R(x^*)+ \langle p^*,x-x^*\rangle,
\end{align}
which implies that $p^*\in \partial R(x^*)$. Thus, we finally get $0\in \nabla E(x^*)+\beta \partial R(x^*).$ This completes the proof.

\end{proof}

\section{Application to inverse problems}\label{se5}
In this section, we introduce two examples to show how our algorithmic framework and its convergence theory can be applied.

\subsection{A convex example: Linear inverse problems}
We have described the linear inverse problems in the introduction. Here, we apply the LBreIF to linear inverse problems with nonnegative data $b\in \RR^m$, considered in \cite{Bauschke2016A} in the following form
\begin{equation}\label{lip}
\min_{x\in\RR^n}\{\Phi(x):=D_\phi(Ax, b)+\lambda R(x)\},
\end{equation}
 where the distance $D_\phi(Ax, b)$ with $\phi(x)=\sum_{i=1}^nx_i\log x_i$ is adopted to measure the ``error" between $b$ and $Ax$, $R(\cdot)$ is a regularizer reflecting prior information on the solution, the parameter $\lambda>0$ balances the data fidelity and the solution regularization.  In order to apply our method and theory, we let
 $E(x)=D_\phi(Ax, b)+\frac{\epsilon}{2}\|Ax-b\|^2$ and $h(x)=\sum_{i=1}^nx_i\log x_i+\frac{\epsilon}{2}\|x\|^2$. It should be noted that we here add the terms  $\frac{\epsilon}{2}\|Ax-b\|^2$ and $\frac{\epsilon}{2}\|x\|^2$ with $\epsilon \geq 0$ to the distance $D_\phi(Ax, b)$ and the Boltzmann-Shannon entropy $\sum_{i=1}^nx_i\log x_i$, respectively. For the former, we aim to meet the condition \eqref{qg} in Theorem \ref{mainthm1}. Actually, in this setting $E(u,b)=D_\phi(u,b)+\frac{\epsilon}{2}\|u-b\|^2$ is $\epsilon$-strongly convex and and its minimizer $\hat{u}$ is attainable.
Hence,
 $$E(u,b)-E(\hat{u},b)\geq \frac{\epsilon}{2}\|u-\hat{u}\|^2.$$
 For the latter, we aim to meet the (LC) condition. Indeed, due to Lemma 8 in \cite{Bauschke2016A}, the (LC) condition holds for the pair of $(E,h)$ with
$$L\geq \max\{\|A\|^2, \max_{1\leq j\leq n}\sum_{i=1}a_{ij}\}.$$
Since $\dom (x_i\log x_i)=[0,\infty)$, we can conclude that $\overline{\dom h}=\dom h$. From the expression of $\nabla h(x)= e +\log x +\epsilon x$ where $e$ stands for the vector whose each entry equals to one, we know that $h$ is continuously differentiable on $\rint\dom h$. If $A$ is surjective, then $Ax=b$ is a consistence system and hence $\cX_0=\arg\min \{E(x): x\in \overline{\dom h}\}$ must be nonempty. Since $R(x)=\|x\|_1$ is real-valued convex, Assumptions \ref{ass}-\ref{ass2} about $R$ can be easily verified to hold. Regarding to the computation, we can reduce  \eqref{a1} to get
\begin{eqnarray}\label{more1}
	x^{k+1}
	= \arg\min_x \{\delta \mu \|x\|_1 + \langle \overline{p}^k, x\rangle + \sum_{i=1}^nx_i\log x_i+\frac{\epsilon}{2}\|x\|^2 \},			
\end{eqnarray}
where $\overline{p}^k = \delta \nabla E(x^k) - \nabla h(x^k) - \delta \mu p^k$. Thus, the entries $x^{k+1}_i$ are the roots of the equations $\log x_i + \epsilon x_i + C_i = 0, i = 1,...,m$
with the constants $C_i = \delta \mu + \overline{p}^k_i + 1$.
As a consequence of Theorem \ref{mainthm1}, we have the following convergence result for the generated sequence $\{x^k\}$.
\begin{corollary}
Let $E$, $h$ and $R$ be given above with a surjective $A$.
Then, the sequence $\{x^k\}$, generated by \eqref{more1} with $x^0=p^0=0$,  converges either to the unique minimizer $\hat{x}:=\arg\min\{\mu\|x\|_1+\frac{\epsilon}{2\delta}\|x\|^2+\frac{1}{\delta}\sum_{i=1}^nx_i\log x_i: x\in \cX_0\}$ or to a boundary point of $\dom h$.
\end{corollary}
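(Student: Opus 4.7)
The plan is to invoke Theorem \ref{mainthm1} directly on the triple $(E,h,R)$ specified in the corollary. The work is entirely verificational: I would walk through Assumptions \ref{ass}, \ref{ass1}, \ref{ass2}, together with the (LC) condition, the growth condition \eqref{qg}, and the nonemptiness of $\cX_0$. Most of these have already been flagged in the paragraph preceding the corollary, so the task is to assemble them into a coherent check.

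For Assumption \ref{ass}: $h(x)=\sum_i x_i\log x_i+\frac{\epsilon}{2}\|x\|^2$ is of Legendre type because the Boltzmann-Shannon entropy is Legendre on $\RR^n_+$ and adding a smooth strongly convex quadratic preserves this property, giving (i). Item (ii) is immediate because $R=\|\cdot\|_1$ is real-valued convex, so $\dom\partial R=\RR^n\supset \rint\dom h$. For (iii), $E$ is differentiable on $\rint\dom h$, continuous on $\dom h$, and the (LC) condition holds by Lemma 8 of \cite{Bauschke2016A} with $L\geq \max\{\|A\|^2,\max_{1\le j\le n}\sum_i a_{ij}\}$. Item (iv) is trivial since $E,R\geq 0$ and $\beta=0$.

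For Assumption \ref{ass1}, the $\epsilon$-strong convexity of $h$ guarantees bounded $D_h$-sublevel sets, while the convergence conditions (ii)-(iii) follow from Assumption H of \cite{Bauschke2016A} applied to the Boltzmann-Shannon part (plus the quadratic, which is trivially well-behaved) and, for $D_R$, from the continuity of the real-valued convex $R$ together with the uniform boundedness of $\partial\|\cdot\|_1$ noted just after Assumption \ref{ass1}. Assumption \ref{ass2} holds for the same reason: $\partial\|x\|_1\subset[-1,1]^n$. For condition \eqref{qg}, I would observe that $u\mapsto E(u,b)=D_\phi(u,b)+\frac{\epsilon}{2}\|u-b\|^2$ is $\epsilon$-strongly convex on its domain, so its unique minimizer $\hat u$ satisfies
\[E(u,b)-E(\hat u,b)\geq \tfrac{\epsilon}{2}\|u-\hat u\|^2,\]
yielding \eqref{qg} with $\psi(t)=\sqrt{2t/\epsilon}$, which satisfies $\lim_{t\to 0}\psi(t)=0$. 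Finally, since $A$ is surjective the system $Ax=b$ is solvable; any such $x$ attains the global minimum of $E$, so $\cX_0\neq\emptyset$.

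With every hypothesis verified, Theorem \ref{mainthm1} applies and yields convergence of $\{x^k\}$ either to a boundary point of $\dom h=\RR^n_+$ or to the unique $\hat x=\arg\min\{\mu R(x)+\frac{1}{\delta}h(x):x\in\cX_0\}$, which on substituting the expressions for $R$ and $h$ is exactly the minimizer named in the corollary. I do not anticipate a genuine obstacle: the technical content is bundled inside Theorem \ref{mainthm1} and Lemma 8 of \cite{Bauschke2016A}; the only delicate point is the bookkeeping that shows $\hat u$ is attainable and that the strong-convexity inequality holds on all of $\dom E(\cdot,b)$ regardless of whether some entries of $b$ vanish, which is handled by working with the closed convex domain of $D_\phi(\cdot,b)$ extended by lower semicontinuity.
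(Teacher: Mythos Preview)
Your proposal is correct and matches the paper's approach: the corollary is presented there without an explicit proof, as a direct consequence of Theorem~\ref{mainthm1}, with the hypothesis checks (the Legendre property of $h$, the (LC) constant from Lemma~8 of \cite{Bauschke2016A}, the closedness $\overline{\dom h}=\dom h$, continuous differentiability of $h$, the strong-convexity inequality yielding \eqref{qg}, and nonemptiness of $\cX_0$) scattered through the paragraph preceding the statement. Your write-up simply gathers these verifications into one place; the only items you leave implicit are the closedness of $\dom h$ and the continuous differentiability of $h$ on $\rint\dom h$, both of which are also required by Theorem~\ref{mainthm1} and are noted in the paper's setup paragraph.
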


\subsection{A nonconvex example: Quadratic inverse problems}
We briefly describe the quadratic inverse problems as follows. Given a finite number of symmetric matrices $A_i\in \RR^{d\times d},~i=1,2\cdots, m$, modeling the measure procedure, and a vector $b\in\RR^m$ recording the measure data $(b_1,b_2,\cdots, b_m)$, the goal is to find $x\in \RR^d$ such that
\begin{equation}\label{qip}
x^TA_ix\simeq b_i, ~i=1,2\cdots, m.
\end{equation}
As a natural extension of the linear inverse problems, the quadratic inverse problems arise in the broad area of signal processing, including for example the phase retrieval problems \cite{2017Phase} as special cases. Similar to the linear inverse problems, the system of quadratic equations \eqref{qip} is usually underdetermined. Thus, there also needs the regularization technique to help find the ``right" solution via solving the regularized optimization problem
\begin{equation}\label{rqip}
\min_{x\in\RR^d}\{\Psi(x):=\frac{1}{4}\sum_{i=1}^m(x^TA_ix-b_i)^2+\lambda R(x)\},
\end{equation}
where the least-squares models the ``error" between $b_i$ and $x^TA_ix$, $R(\cdot)$ is a regularizer reflecting prior information on the solution, the parameter $\lambda>0$ balances the data fidelity and the solution regularization. Instead of solving \eqref{rqip}, we consider the LBreIF for \eqref{qip} with sparse prior. To this end, we let $h(x):=\frac{1}{4}\|x\|^4+\frac{1}{2}\|x\|^2$, $R(x)=\|x\|_1$, and
$$E(x):=\frac{1}{4}\sum_{i=1}^m(x^TA_ix-b_i)^2+\frac{\epsilon}{2}\|x\|^2.$$
Here, the term $\frac{\epsilon}{2}\|x\|^2$ with $\epsilon >0$ is added to ensure the level boundedness of $E(x)$. Due to Lemma 5.1 in \cite{bolte2017first}, the (LC) condition holds for the pair of $(E,h)$ with
$$L\geq \sum_{i=1}^m (3\|A_i\|^2+\|A_i\|\cdot|b_i|)+\epsilon.$$
Note that
$$\nabla h(x)=(\|x\|^2+1)x, ~~\nabla E(x)=\sum_{i=1}^m(x^TA_ix-b_i)A_ix+\epsilon x.$$
Now, the iterate sequence $\{x^k\}$, based on LBreIF, is defined via solving
\begin{eqnarray}\label{more2}
	x^{k+1}
    := \arg\min_x \{\delta^k \mu^k \|x\|_1 + \langle \overline{p}^k, x\rangle + \frac{1}{4}\|x\|^4+\frac{1}{2}\|x\|^2 \},				
\end{eqnarray}
where $\overline{p}^k = \delta^k \nabla E(x^k) - \nabla h(x^k) - \delta^k \mu^k p^k$. Following the same spirit in \cite{bolte2017first}, we can deduce the closed-form formula $x^{k+1} = - t^* S_{\delta^k \mu^k}(\overline{p}^k)$,
where $t^*$ is the unique positive real root of $t^3 \|S_{\delta^k \mu^k}(\overline{p}^k) \|^2 + t -1 =0$ and  $S_{\delta^k \mu^k}(\overline{p}^k)$ is the soft-thresholding operator
$S_{\delta^k \mu^k}(\overline{p}^k) := \max\{|\overline{p}^k|-\delta^k \mu^k, 0 \} {\rm sign} (\overline{p}^k).$

For any bounded set $\Omega_1\subset\RR^d$, it is easy to verify that both $\nabla h$ and $\nabla E$ are Lipschitz continuous on $\Omega_1$ and hence Assumption \ref{ass3} holds for $E$ and $h$. Assumption \ref{ass}, (ii) of Assumption \ref{ass1}, and Assumption \ref{ass2} can also be easily verified for $E$ and $h$. Besides, it is not hard to see that $h$ is strongly convex with $\dom h=\RR^d$. Therefore, we can apply Theorem \ref{mainthm2} to get the following result.

\begin{corollary}
Let the parameters $\delta^k$ and $\mu^k$ satisfy \eqref{p1}-\eqref{p2} and $\sum_{k=0}^\infty(\mu^k-\mu)<\infty$. Let $\{x^k\}$ be the sequence generated by \eqref{more2} with $x^0=p^0=0$. Then, the sequence $\{x^k\}$ has finite length.
Moreover, the sequence $\{x^k\}$ converges to a critical point $x^*$ of $E$ if $\beta=0$ and
$\lim_{k\rightarrow\infty}E(x^k)=E(x^*).$
\end{corollary}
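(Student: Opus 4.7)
The plan is to verify the hypotheses of Theorem \ref{mainthm2} for the specific choice $E(x)=\tfrac14\sum_{i=1}^m(x^TA_ix-b_i)^2+\tfrac\epsilon2\|x\|^2$, $h(x)=\tfrac14\|x\|^4+\tfrac12\|x\|^2$, $R(x)=\|x\|_1$ and then invoke the theorem. The paragraph preceding the corollary already flags that Assumption \ref{ass}, (ii) of Assumption \ref{ass1}, Assumption \ref{ass2}, and Assumption \ref{ass3} (for both $h$ and $E$) are routine here — $R=\|\cdot\|_1$ is finite-valued convex with locally bounded subdifferential, $\dom h=\RR^d$, and $\nabla h$, $\nabla E$ are polynomials hence locally Lipschitz. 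Strong convexity of $h$ follows from the presence of the $\tfrac12\|x\|^2$ term (indeed $\nabla^2 h(x)=\|x\|^2 I+2xx^T+I\succeq I$), and $\overline{\dom h}=\dom h=\RR^d$ is immediate.

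Next I would establish the two remaining hypotheses. For the level set condition, since $R\ge 0$ and $E(x)\ge \tfrac\epsilon2\|x\|^2$ with $\epsilon>0$, the function $f_\beta=\beta R+E$ is coercive on $\RR^d$, so every sublevel set $\{x: f_\beta(x)\le f_\beta(x^0)\}$ is bounded. For the Kurdyka--{\L}ojasiewicz property of the surrogate
\[
F_{\beta,\mu}(x,y)=E(x)+(\beta+\mu)R(x)+\mu R^*(y)-\mu\langle x,y\rangle,
\]
I would observe that $E$ is a polynomial (hence semi-algebraic), $R=\|\cdot\|_1$ is semi-algebraic, and its conjugate $R^*=\iota_{[-1,1]^d}$ is semi-algebraic too; the bilinear coupling $\langle x,y\rangle$ is polynomial. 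Thus $F_{\beta,\mu}$ is a proper lsc semi-algebraic function, and so it satisfies the KL property at every point of its domain (by the standard result of Bolte--Daniilidis--Lewis).

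With all hypotheses in place, Theorem \ref{mainthm2} applies directly: it yields the finite length $\sum_k\|x^{k+1}-x^k\|<\infty$, so $\{x^k\}$ is Cauchy and converges to some $x^*$. Since the algorithm takes $\beta=0$, the theorem further delivers $\nabla E(x^*)=0$, i.e.\ $x^*$ is a critical point of $E$. The value convergence $\lim_{k\to\infty}E(x^k)=E(x^*)$ comes from Lemma \ref{mainlemma02} applied to $f_\beta=E$ (whose hypotheses — $h$ strongly convex on $\dom h=\overline{\dom h}$, (ii) of Assumption \ref{ass1}, and boundedness of the level set — we have already checked), combined with the fact that $x^*\in\Omega$ once $x^k\to x^*$.

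The only nontrivial verification is the KL property; once one recognises that every ingredient of $F_{\beta,\mu}$ is semi-algebraic, the conclusion is automatic, so there is no real obstacle — the proof is essentially a checklist followed by a one-line appeal to Theorem \ref{mainthm2}.
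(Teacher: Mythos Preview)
Your proposal is correct and follows exactly the route the paper intends: the corollary is stated without proof in the paper, the preceding paragraph checking the assumptions and then simply saying ``Therefore, we can apply Theorem~\ref{mainthm2}.'' You are actually more careful than the paper, since you explicitly verify the KL property of $F_{\beta,\mu}$ via semi-algebraicity (the paper leaves this implicit) and you point out that the value convergence $E(x^k)\to E(x^*)$ follows from Lemma~\ref{mainlemma02} --- though here you could also simply invoke continuity of the polynomial $E$ once $x^k\to x^*$ is known.
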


\section{Concluding remarks}\label{se6}
The Linearized Bregman iteration was revisited in this paper from the perspective of going beyond the traditional restriction--the gradient Lipschitz continuity. In convex case, we found that the important convergence result of LBreI--Theorem \ref{Th1} still remains true even if the gradient Lipschitz continuity is replaced by the Lipschitz-like convexity condition; In nonconvex case, we were able to show global convergence under some mild assumptions. At last, we presented two examples to demonstrate the widespread application of the proposed algorithmic framework--LBreIF.

In future, we will study the convergence rate of LBreIF and consider randomized variants of LBreIF for large-scale optimization problems.

\section*{Appendix}
\begin{definition}[ Kurdyka-{\L}ojasiewicz property and function, \cite{2014Proximal}]\label{KL}
(a) The function $\sigma: \EE \rightarrow (-\infty, +\infty]$ is said to have the  Kurdyka-{\L}ojasiewicz property at $x^*\in\dom(\partial \sigma)$ if there exist $\eta\in (0, +\infty]$, a neighborhood $U$ of $x^*$ and a continuous concave function $\varphi: [0, \eta)\rightarrow\RR_+$ such that
\begin{enumerate}
  \item $\varphi(0)=0$.
  \item $\varphi$ is $C^1$ on $(0, \eta)$.
  \item for all $s\in(0, \eta)$, $\varphi^{\prime}(s)>0$.
  \item for all $x$ in $U\bigcap[\sigma(x^*)<\sigma<\sigma(x^*)+\eta]$, the Kurdyka-{\L}ojasiewicz inequality holds
  \begin{equation}\label{KL1}
  \varphi^{\prime}(\sigma(x)-\sigma(x^*))\dist(0,\partial \sigma(x))\geq 1.
\end{equation}
\end{enumerate}

(b) Proper lower semicontinuous functions which satisfy the Kurdyka-{\L}ojasiewicz inequality at each point of $\dom(\partial \sigma)$ are called KL functions.
\end{definition}

\begin{lemma}
 [Uniformized KL property, \cite{2014Proximal}] \label{UKL} Let $\Omega$ be a compact set and let $\sigma ¦Ò:\EE\rightarrow (-\infty, +\infty]$  be a proper lsc function.  Assume that $\sigma$ is constant on $\Omega$ and satisfies the KL property at each point of $\Omega$. Then, there exist $\zeta>0, \eta>0$ and $\varphi$ satisfies the same conditions as in Definition \ref{KL} such that for all $x^*\in \Omega$ and all $x$ in
 \begin{equation}
 \{x:\dist(x,\Omega)<\zeta\}\bigcap \{x: \sigma(x^*)<\sigma(x)<\sigma(x^*)+\eta\}
\end{equation}
 the condition \eqref{KL1} holds.
\end{lemma}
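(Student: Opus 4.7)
The plan is to exploit compactness of $\Omega$ to glue together pointwise KL data into a single desingularizing function. Since $\sigma$ is constant on $\Omega$ by hypothesis, write $\sigma^* := \sigma(x^*)$ for any (equivalently, every) $x^* \in \Omega$. Then the level-set strip $\{x : \sigma(x^*) < \sigma(x) < \sigma(x^*) + \eta\}$ appearing in the statement is independent of the choice of $x^* \in \Omega$, and the inequality to be proved reduces to
$$\varphi'(\sigma(x) - \sigma^*)\,\dist(0, \partial\sigma(x)) \geq 1$$
for every $x$ in the tubular-neighborhood-times-strip intersection. Thus a single triple $(\zeta, \eta, \varphi)$ will suffice, and the two quantifiers over $x^*$ and $x$ collapse into one.

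For each $y \in \Omega$ I would invoke Definition \ref{KL} at $y$ (recalling $\sigma(y) = \sigma^*$) to obtain an open neighborhood $U_y$ of $y$, a parameter $\eta_y > 0$, and a concave $C^1$ desingularizing function $\varphi_y : [0, \eta_y) \to \RR_+$ such that the pointwise KL inequality holds on $U_y \cap [\sigma^* < \sigma < \sigma^* + \eta_y]$. The family $\{U_y\}_{y \in \Omega}$ is an open cover of the compact set $\Omega$, so a finite subcover $U_1, \ldots, U_N$ exists with associated data $(y_i, \eta_i, \varphi_i)$. I would then set $\eta := \min_i \eta_i > 0$ and $\varphi(s) := \sum_{i=1}^N \varphi_i(s)$ for $s \in [0, \eta)$. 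The existence of $\zeta > 0$ with $\{x : \dist(x, \Omega) < \zeta\} \subseteq \bigcup_i U_i$ follows from a standard contradiction: if no such $\zeta$ existed, a sequence $x_n$ with $\dist(x_n, \Omega) \to 0$ lying outside the open set $\bigcup_i U_i$ would, via compactness of $\Omega$, admit a limit point in $\Omega \subseteq \bigcup_i U_i$, contradicting openness of the union.

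Verification is then routine. The function $\varphi$ inherits $\varphi(0) = 0$, continuity, concavity, $C^1$-regularity on $(0, \eta)$, and strict positivity of its derivative from the individual $\varphi_i$, matching conditions (1)--(3) of Definition \ref{KL}. For any $x$ in the intersection set, $x$ lies in some $U_i$ and $\sigma(x) - \sigma^* < \eta \leq \eta_i$, so the pointwise KL inequality at $y_i$ yields
$$\varphi_i'(\sigma(x) - \sigma^*)\,\dist(0, \partial\sigma(x)) \geq 1.$$
Because $\varphi'(s) = \sum_j \varphi_j'(s) \geq \varphi_i'(s)$ by positivity of each $\varphi_j'$, the uniform KL inequality follows, which is condition (4).

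The only place where any real care is needed is the construction of the uniform tubular radius $\zeta$, and even there the argument is a purely topological compactness exercise with no analytical subtlety. All remaining ingredients -- the reduction enabled by constancy of $\sigma$ on $\Omega$, the extraction of a finite subcover, and the closure of the class of admissible desingularizing functions under finite sums -- are essentially bookkeeping, so I do not anticipate any serious obstacle beyond writing down the compactness step cleanly.
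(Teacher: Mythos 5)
Your argument is correct and is essentially the standard proof of this result from the cited reference (Bolte--Sabach--Teboulle, \emph{Proximal alternating linearized minimization}, Lemma 6): the paper itself offers no proof, only the citation. Covering $\Omega$ by the pointwise KL neighborhoods, passing to a finite subcover, taking $\eta=\min_i\eta_i$ and $\varphi=\sum_i\varphi_i$, and extracting the tubular radius $\zeta$ by the compactness contradiction is exactly the intended route, and your verification of conditions (1)--(4) for the summed desingularizing function is sound.
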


\section*{Acknowledgements}
The first author was supported by the National Science Foundation of China (No.11971480), the Natural Science Fund of Hunan for Excellent Youth (No.2020JJ3038), and the Fund for NUDT Young Innovator Awards (No. 20190105).

\small

\end{document}